\newtheorem{theorem}{Theorem}
\newtheorem{proposition}{Proposition}
\newtheorem{lemma}{Lemma}
\newtheorem{corollary}{Corollary}
\theoremstyle{definition}
\def\xxi{{\boldsymbol{\xi}}}
\def\zzeta{{\boldsymbol{\zeta}}}
\def\f{\bar f}
\def\pp{\scrP_\Lambda'}
\def\bM{\mathbf M}
\def\bY{\mathbf Y}
\def\bz{\mathbf z}
\def\RR{{\mathbb R}}
\def\calA{\mathcal A}
\def\scrP{\mathscr P}
\def\calF{\mathcal F}
\def\calX{\mathcal X}
\def\bH{\boldsymbol H}
\def\bh{\boldsymbol h}
\def\T{\top}
\def \1{{\rm 1}\mskip -4,5mu{\rm l} }
\def\l{\lambda}
\def\tr{{\rm Tr}}
\def\KL{\mathcal K}
\def\cal#1{\mathcal#1}
\begin{document}

\title[Aggregation and sparsity]{Aggregation by exponential
weighting, sharp PAC-Bayesian bounds and sparsity}

\author{A.\ Dalalyan \and A.B.\ Tsybakov$^{\dag}$}

\address{A.\ Dalalyan: LPMA, University of Paris 6\\
4, Place Jussieu, 75252 Paris cedex 05, France}

\address{A.B.\
Tsybakov: Laboratoire de
Statistique, CREST, Timbre J340 \\
3, av.\ Pierre Larousse, 92240 Malakoff cedex, France \\
and LPMA, University of Paris 6\\
4, Place Jussieu, 75252 Paris cedex 05, France}

\thanks{$^\dag$Research partially supported by the grant ANR-06-BLAN-0194 and by the PASCAL
Network of Excellence}

\begin{abstract}
We study the problem of aggregation under the squared loss in the
model of regression with deterministic design. We obtain sharp
PAC-Bayesian risk bounds for aggregates defined via exponential
weights, under general assumptions on the distribution of errors and
on the functions to aggregate. We then apply these results
to derive sparsity oracle inequalities.
\end{abstract}

\keywords{aggregation, nonparametric regression, oracle inequalities, sparsity}

\subjclass[2000]{62G08, 62G05, 62G20,}

\maketitle

\section{Introduction}

Aggregation with exponential weights is an important tool in machine
learning. It is used for estimation, prediction with expert advice,
in PAC-Bayesian settings and other problems. In this paper we
establish a link between aggregation with exponential weights and
sparsity. More specifically, we obtain a new type of oracle
inequalities and apply them to show that the exponential weighted
aggregate with a suitably chosen prior has a sparsity property.

We consider the regression model
\begin{equation}
\label{0} Y_i= f(x_i) + \xi_i, \quad i=1,\dots, n,
\end{equation}
where $x_1,\dots,x_n$ are given non-random elements of a set ${\cal
X}$, $f:{\cal X} \to \RR$ is an unknown function, and $\xi_i$ are
i.i.d. zero-mean random variables on a probability space
$(\Omega,\calF,P)$ where $\Omega\subseteq \RR$. The problem is to
estimate the function $f$ from the data
$D_n=((x_1,Y_1),\dots,(x_n,Y_n))$.

Let $(\Lambda,\calA)$ be a measurable space and denote by
$\scrP_\Lambda$ the set of all probability measures defined on
$(\Lambda,\calA)$. Assume that we are given a family
$\{f_\lambda,\,\lambda\in\Lambda\}$ of functions $f_\lambda:\mathcal
X\to\RR$ such that the mapping $\lambda\mapsto f_\lambda(x)$ is
measurable for all $x\in \mathcal X$, where $\RR$ is equipped with
the Borel $\sigma$-field. Functions $f_\lambda$ can be viewed either
as weak learners or as some preliminary estimators of $f$ based on a
training sample independent of $\bY\triangleq(Y_1,\ldots,Y_n)$ and
considered as frozen.

We study the problem of aggregation of functions in
$\{f_\lambda,\,\lambda\in\Lambda\}$ under the squared loss. The aim
of aggregation is to construct an estimator $\hat f_n$ based on the
data $D_n$ and called the {\it aggregate} such that the expected
value of its squared error
$$
\|\hat f_n-f\|_n^2\triangleq\frac1n\sum_{i=1}^n\big(\hat
f_n(x_i)-f(x_i)\big)^2
$$
is approximately as small as the oracle value
$\inf_{\lambda\in\Lambda} \|f-f_\lambda\|_n^2$.

In this paper we consider aggregates that are mixtures of functions
$f_\l$ with exponential weights. For a measure  $\pi$ from
$\scrP_\Lambda$ and for $\beta>0$ we set
\begin{equation}\label{aggr}
\hat f_n(x)\triangleq\int_\Lambda
\theta_\lambda(\bY)f_\lambda(x)\,\pi(d\lambda), \quad x\in \calX,
\end{equation}
with
\begin{equation}\label{ag1}
\theta_\lambda(\bY)=\frac{\exp\big\{-n\|\bY-f_\lambda\|_n^2/\beta\big\}}
{\int_\Lambda \exp\big\{-n\|\bY-f_w\|_n^2/\beta\big\}\pi(dw)}\,
\end{equation}
where $\|\bY-f_\lambda\|_n^2\triangleq
\frac1n\sum_{i=1}^n\big(Y_i-f_\lambda(x_i)\big)^2$ and we assume
that $\pi$ is such that the integral in (\ref{aggr}) is finite.

 Note that $\theta_\lambda(\bY)= \theta_\lambda(\beta,\pi,\bY)$, so that $\hat f_n$ depends on two
tuning parameters: the probability measure $\pi$ and the
``temperature" parameter $\beta$. They have to be selected in a
suitable way.

Using the Bayesian terminology, $\pi(\cdot)$ is a prior distribution
and $\hat f_n$ is the posterior mean of $f_\lambda$ in a ``phantom''
model
\begin{equation}\label{phm}
Y_i=f_\lambda(x_i)+\xi_i'
\end{equation}
where $\xi_i'$ are i.i.d. normally distributed random variables with
mean $0$ and variance $\beta/2$.

The idea of mixing with exponential weights has been discussed by
many authors apparently since 1970-ies (see \cite{y01} for an
overview of the subject). Most of the work has been focused on the
important particular case where the set of estimators is finite,
i.e., w.l.o.g. $\Lambda=\{1,\dots,M\}$, and the distribution $\pi$
is uniform on $\Lambda$. Procedures of the type
(\ref{aggr})--(\ref{ag1}) with general sets $\Lambda$ and priors
$\pi$ came into consideration quite recently
\cite{cat99,catbook:01,v:01,buno05,zhang1,zhang2,a1,a2}, partly in
connection with the PAC-Bayesian approach. For finite $\Lambda$,
procedures (\ref{aggr})--(\ref{ag1}) were independently introduced
for prediction of deterministic individual sequences with expert
advice. Representative work and references can be found in
\cite{v:90,lw94,cetal, kw99, lcb:06}; in this framework the results
are proved for cumulative loss and no assumption is made on the
statistical nature of the data, whereas the observations $Y_i$ are
supposed to be uniformly bounded by a known constant.

We mention also related work on cumulative exponential weighting
methods: there the aggregate is defined as the average
$n^{-1}\sum_{k=1}^n{\hat f_k}$. For regression models with random
design, such procedures are introduced and analyzed in \cite{cat99},
\cite{catbook:01} and \cite{y00}. In particular, \cite{cat99} and
\cite{catbook:01} establish a sharp oracle inequality, i.e., an
inequality with leading constant~1. This result is further refined
in \cite{buno05} and \cite{jrt05}. In addition, \cite{jrt05} derives
sharp oracle inequalities not only for the squared loss but also for
general loss functions. However, these techniques are not helpful in
the framework that we consider here, because the averaging device is
not meaningfully adapted to models with non-identically distributed
observations.

For finite $\Lambda$, the aggregate $\hat f_n$ can be computed on-line. This, in particular,
motivated its use for on-line prediction.
Papers \cite{jrt05}, \cite{jntv05} point out that $\hat f_n$ and its
averaged version can be obtained as a special case of mirror descent
algorithms that were considered earlier in deterministic
minimization. Finally, \cite{ccg04,jrt05} establish some links
between the results for cumulative risks proved in the theory of
prediction of deterministic sequences and generalization error
bounds for the aggregates in the stochastic i.i.d. case.

In this paper we obtain sharp oracle inequalities for the aggregate
$\hat f_n$ under the squared loss, i.e., oracle inequalities with
leading constant 1 and optimal rate of the remainder term. Such an
inequality has been pioneered in \cite{lb06} in a
somewhat different setting. Namely, it is assumed in \cite{lb06}
that $\Lambda$ is a finite set, the errors $\xi_i$ are Gaussian and
$f_\lambda$ are estimators constructed from the same sample $D_n$
and satisfying some strong restrictions (essentially, these should
be the projection estimators). The result of \cite{lb06} makes use
of Stein's unbiased risk formula, and gives a very precise constant
in the remainder term of the inequality. Inspection
of the argument in \cite{lb06} shows that it can be also applied in
the following special case of our setting: $f_\lambda$ are arbitrary
fixed functions, $\Lambda$ is a finite set and the errors $\xi_i$
are Gaussian.

The general line of our argument is to establish some PAC-Bayesian
risk bounds (cf.\  (\ref{oracle6}), (\ref{oracle})) and then to derive
sharp oracle inequalities by making proper choices of the
probability measure $p$ involved in those bounds (cf.\ Sections
\ref{sec4}, \ref{sec7}).

The main technical effort is devoted to the proof of the
PAC-Bayesian bounds (Sections \ref{sec2}, \ref{sec3}, \ref{sec5}).
The results are valid for general $\Lambda$ and arbitrary functions
$f_\lambda$ satisfying some mild conditions. Furthermore, we treat
non-Gaussian errors $\xi_i$. For this purpose, we suggest three
different approaches to prove the PAC-Bayesian bounds. The first one
is based on integration by parts techniques that generalizes Stein's
unbiased risk formula (Section \ref{sec2}). It is close in the
spirit to \cite{lb06}. This approach leads to most accurate results
but it covers only a narrow class of distributions of the errors
$\xi_i$. In Section \ref{sec3} we introduce another techniques based
on dummy randomization which allows us to obtain sharp risk bounds
when the distributions of errors $\xi_i$ are $n$-divisible. Finally,
the third approach (Section \ref{sec5}) invokes the Skorokhod
embedding and covers the class of all symmetric error distributions
with finite moments of order larger than or equal to 2. Here the
price to pay for the generality of the distribution of errors is in
the rate of convergence that becomes slower if only smaller moments
are finite.

In Section \ref{sec7} we analyze our risk bounds in the important
special case where $f_\lambda$ is a linear combination of $M$ known
functions $\phi_1,\ldots,\phi_M$ with the vector of weights
$\lambda=(\lambda_1,\dots,\lambda_M)$: $f_\lambda=\sum_{j=1}^M
\lambda_j\phi_j.$ This setting is connected with the following three
problems.

1.{\it High-dimensional linear regression.} Assume that the
regression function has the form $ f=f_{\lambda^*}$ where
$\lambda^*\in \RR^M$ is an unknown vector, in other words we have a
linear regression model. During the last years a great deal of
attention has been focused on estimation in such a linear model
where the number of variables $M$ is much larger than the sample
size $n$. The idea is that the effective dimension of the model is
defined not by the number of potential parameters $M$ but by the
unknown number of non-zero components $M(\lambda^*)$ of vector
$\lambda^*$ that can be much smaller than $n$. In this situation
methods like Lasso, LARS or Dantzig selector are used \cite{ehjt04,
ct}. It is proved that if $M(\lambda^*)\ll n$ and if the dictionary
$\{\phi_1,\ldots,\phi_M\}$ satisfies certain conditions, then the
vector $\lambda^*$ and the function $f$ can be estimated with
reasonable accuracy \cite{gr04,btw07a,btw07b,ct,zh07,brt07}.
However, the conditions on the dictionary $\{\phi_1,\ldots,\phi_M\}$
required to get risk bounds for the Lasso and Dantzig selector are
quite restrictive. One of the consequences of our results in Section
\ref{sec7} is that a suitably defined aggregate with exponential
weights attains essentially the same and sometimes even better
behavior than the Lasso or Dantzig selector with no assumption on
the dictionary, except for the standard normalization.

2.{\it Adaptive nonparametric regression.} Assume that $f$ is a
smooth function, and $\{\phi_1,\ldots,\phi_M\}$ are the first $M$
functions from a basis in $L_2(\RR^d)$. If the basis is orthonormal,
it is well-known that adaptive estimators of $f$ can be constructed
in the form $\sum_{j=1}^{M} \hat \lambda_j\phi_j$ where $\hat
\lambda_j$ are appropriately chosen data-driven coefficients and $M$
is a suitably selected integer such that $M\le n$ (cf.,e.g.,
\cite{nem,t04}). Our aggregation procedure suggests a more general
way to treat adaptation covering the problems where the system
$\{\phi_j\}$ is not necessarily orthonormal, even not necessarily a
basis, and $M$ is not necessarily smaller than $n$. In particular,
the situation where $M\gg n$ arises if we want to deal with sparse
functions $f$ that have very few non-zero scalar products with
functions from the dictionary $\{\phi_j\}$, but these non-zero
coefficients can correspond to very high ``harmonics". The results
of Section 7 cover this case.

3. {\it Linear, convex or model selection type aggregation.} Assume
now that $\phi_1$,$\dots$,$\phi_M$ are either some preliminary
estimators of $f$ constructed from a training sample independent of
$(Y_1,\dots,Y_n)$ or some weak learners, and our aim is to construct
an aggregate which is approximately as good as the best among
$\phi_1,\dots,\phi_M$ or approximately as good as the best linear or
convex combination of $\phi_1,\dots,\phi_M$. In other words, we deal
with the problems of model selection (MS) type aggregation or
linear/convex aggregation respectively \cite{nem,tsy:03}. It is
shown in \cite{btw07a} that a BIC type aggregate achieves optimal
rates simultaneously for MS, linear and convex aggregation. This
result is deduced in \cite{btw07a} from a sparsity oracle inequality
(SOI), i.e., from an oracle inequality stated in terms of the number
$M(\lambda)$ of non-zero components of $\lambda$. For a discussion
of the concept of SOI we refer to \cite{tsy06}. Examples of SOI are
proved in \cite{k06,btw06,btw07a, btw07b,vdg06, brt07} for the
Lasso, BIC and Dantzig selector aggregates. Note that the SOI for
the Lasso and Dantzig selector are not as strong as those for the
BIC: they fail to guarantee optimal rates for MS, linear and convex
aggregation unless $\phi_1,\dots,\phi_M$ satisfy some very
restrictive conditions. On the other hand, the BIC aggregate is
computationally feasible only for very small dimensions $M$. So,
neither of these methods achieves both the computational efficiency
and the optimal theoretical performance.

In Section 7 we propose a new approach to sparse recovery that
realizes a compromise between the theoretical properties and the
computational efficiency. We first suggest a general technique of
deriving SOI from the PAC-Bayesian bounds, not necessarily for our
particular aggregate $\hat f_n$. We then show that the exponentially
weighted aggregate $\hat f_n$ with an appropriate prior measure
$\pi$ satisfies a sharp SOI, i.e., a SOI with leading constant 1.
Its theoretical performance is comparable with that of the BIC in
terms of sparsity oracle inequalities for the prediction risk. No
assumption on the dictionary $\phi_1,\dots,\phi_M$ is required,
except for the standard normalization. Even more, the result is
sharper than the best available SOI for the BIC-type aggregate
\cite{btw07a}, since the leading constant in the oracle inequality
of \cite{btw07a} is strictly greater than~1. At the same time,
similarly to the Lasso and Dantzig selector, our method is
computationally feasible for moderately large dimensions~$M$.

\section{Some notation}

In what follows we will often write for brevity $\theta_\lambda$
instead of $\theta_\lambda(\bY)$. For any vector
$\bz=(z_1,\dots,z_n)^\T\in \RR^n$ set
$$
\|\bz\|_n =\left(\frac1{n}\sum_{i=1}^n z_i^2\right)^{1/2}.
$$
Denote by $\pp$ the set of all measures $\mu\in\scrP_\Lambda$ such
that $\lambda\mapsto f_\lambda(x)$ is integrable w.r.t. $\mu$ for
$x\in\{x_1,\dots,x_n\}$. Clearly $\pp$ is a convex subset of
$\scrP_\Lambda$. For any measure $\mu\in\pp$ we define
$$
\f_\mu(x_i)=\int_\Lambda f_\lambda(x_i)\,\mu(d\lambda), \quad
i=1,\dots,n.
$$
We denote by $\theta\cdot\pi$ the probability measure
 $A\mapsto \int_A \theta_\lambda\,\pi(d\lambda)$ defined on $\calA$.
 With
the above notation, we can write $$\hat f_n=\f_{\theta\cdot\pi}.$$

\section{A PAC-Bayesian bound based on unbiased risk estimation}
\label{sec2}

In this section we prove our first PAC-Bayesian bound. An important
element of the proof is an extension of Stein's identity which uses
integration by parts. For this purpose we introduce the function
$$
m_\xi(x)=-E[\xi_1\1(\xi_1\le x)]=-\int_{-\infty}^x z\, dF_\xi(z)=
\int_{x}^\infty z\, dF_\xi(z),
$$
where $F_\xi(z)=P(\xi_1\le z)$ is the c.d.f.\ of $\xi$, $\1(\cdot)$
denotes the indicator function and the last equality follows from
the assumption $E(\xi_1)=0$ . Since $E|\xi_1|<\infty$ the function
$m_\xi$ is well defined, non negative and satisfies
$m_\xi(-\infty)=m_\xi(+\infty)=0$. Moreover, $m_\xi$ is increasing
on $(-\infty,0]$, decreasing on $[0,+\infty)$ and $\max_{x\in\RR}
m_\xi(x)=m_\xi(0)=\frac12E|\xi_1|$.
 We will need the following assumption.

\begin{itemize}
\item[\bf (A)]
{\it $E(\xi_1^2)=\sigma^2<\infty$ and the measure
$m_\xi(z)\,dz$ is absolutely continuous with respect to $dF_\xi(z)$
with a bounded Radon-Nikodym derivative, i.e., there exists a
function $g_\xi :\RR\to\RR_+$ such that
$\|g_\xi\|_\infty\triangleq\sup_{x\in\RR} g_\xi(x)<\infty$ and
$$
\int_a^{a'} m_\xi(z)\,dz=\int_a^{a'} g_\xi(z)\,dF_\xi(z),\qquad \forall a,a'\in\RR.
$$
}
\end{itemize}
Clearly, Assumption (A) is a restriction on the probability
distribution of the errors $\xi_i$. Some examples where Assumption
(A) is fulfilled are:
\begin{itemize}
\item[(i)] If $\xi_1\sim \mathcal N(0,\sigma^2)$, then $g_\xi(x)\equiv \sigma^2$.
\item[(ii)] If $\xi_1$ is uniformly distributed in the interval $[-b,b]$, then $m_\xi(x)=(b^2-x^2)_+/(4b)$ and $g_\xi(x)=(b^2-x^2)_+/2$.
\item[(iii)] If $\xi_1$ has a density function $f_\xi$ with compact
support $[-b,b]$ and such that $f_\xi(x)\ge f_{\text{min}}>0$ for
every $x\in[-b,b]$, then assumption (A) is satisfied with
$g_\xi(x)=m_\xi(x)/f_\xi(x)\le E|\xi_1|/(2f_{\text{min}})$.
\end{itemize}
We now give some examples where (A) is not fulfilled:
\begin{itemize}
\item[(iv)] If $\xi_1$ has a double exponential distribution with
zero mean and variance $\sigma^2$, then $g_\xi(x)= (\sigma^2+\sqrt{2\sigma^2}|x|)/2$.
\item[(v)] If $\xi_1$ is a Rademacher random variable, then
$m_\xi(x)=\1(|x|\le 1)/2$, and the measure $m_\xi(x)dx$ is not
absolutely continuous with respect to the distribution of $\xi_1$.
\end{itemize}
The following lemma can be viewed as an extension of Stein's
identity (cf.\ \cite{lemcas}).
\begin{lemma}\label{lem1} Let $T_n(x,\bY)$ be an estimator of $f(x)$
such that the mapping $\bY\mapsto T_n(\bY)\triangleq
(T_n(x_1,\bY),\ldots,T_n(x_n,\bY))^\T$ is continuously
differentiable and let us denote by $\partial_j T_n(x_i,\bY)$ the
partial derivative of the function $\bY\mapsto T_n(\bY)$ with
respect to the $j$th coordinate of $\bY$. If Assumption (A) and the
following condition
\begin{equation}\label{cond1}
\begin{matrix}
\displaystyle\int_\RR|y|\int_0^y\,|\partial_i
T_n(x_i,f+\bz)|\,dz_i\,dF_\xi(y)<\infty,  \quad
i=1,\dots,n,\\
\text{or}\\
\partial_i T_n(x_i,\bY)\ge 0,\qquad \forall\bY\in\RR^n,\quad \,i=1,\ldots,n,
\phantom{\text{$\displaystyle\int$}}
\end{matrix}
\end{equation}
are satisfied where $\bz=(z_1,\dots,z_n)^\T$,
$f=(f(x_1),\dots,f(x_n))^\T$ then
$$
E[\hat r_n(\bY)]=E\Big(\|T_n(\bY)-f\|_n^2\Big),
$$
where
$$
\hat r_n(\bY)=\|T_n(\bY)-\bY\|_n^2+\frac{2}n\sum_{i=1}^n
\partial_i T_n(x_i,\bY)\,g_\xi(\xi_i)-\sigma^2.
$$
\end{lemma}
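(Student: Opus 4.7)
The plan is to reduce the identity to an extension of Stein's formula that replaces $\xi_i T_n(x_i,\bY)$ by $g_\xi(\xi_i)\partial_i T_n(x_i,\bY)$ under expectation, and then to establish this one-dimensional identity via integration by parts against the measure $m_\xi(z)\,dz$.

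First I would write $Y_i=f(x_i)+\xi_i$ and expand
\[
\|T_n(\bY)-\bY\|_n^2 = \|T_n(\bY)-f\|_n^2 - \frac{2}{n}\sum_{i=1}^n \xi_i\bigl(T_n(x_i,\bY)-f(x_i)\bigr) + \frac{1}{n}\sum_{i=1}^n \xi_i^2.
\]
Taking expectations and using $E[\xi_i]=0$, $E[\xi_i^2]=\sigma^2$, and the fact that $f(x_i)$ is deterministic, the identity to prove reduces to the Stein-type relation
\[
E\bigl[\xi_i\, T_n(x_i,\bY)\bigr] = E\bigl[g_\xi(\xi_i)\,\partial_i T_n(x_i,\bY)\bigr], \quad i=1,\dots,n.
\]

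Next I would fix $i$ and condition on $\{\xi_j\}_{j\neq i}$, so that it suffices to prove, for a fixed smooth function $\phi(z_i)\triangleq T_n(x_i,f+\bz)$, the one-dimensional identity
\[
\int_\RR \phi(z)\,z\,dF_\xi(z) \;=\; \int_\RR \phi'(z)\,m_\xi(z)\,dz.
\]
To get this, I would write $\phi(z)=\phi(0)+\int_0^z \phi'(t)\,dt$ on $z>0$ and $\phi(z)=\phi(0)-\int_z^0 \phi'(t)\,dt$ on $z<0$, insert these representations into the left-hand side, and swap the order of integration. On the positive side the inner integral becomes $\int_t^\infty z\,dF_\xi(z)=m_\xi(t)$, and on the negative side $-\int_{-\infty}^t z\,dF_\xi(z)=m_\xi(t)$; the $\phi(0)$-terms cancel because $\int z\,dF_\xi=0$. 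Assumption~(A) then lets me replace $m_\xi(z)\,dz$ by $g_\xi(z)\,dF_\xi(z)$, giving the claimed Stein-type identity. Unconditioning on $\{\xi_j\}_{j\neq i}$ and summing over $i$ finishes the proof.

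The main technical obstacle is justifying the exchange of integration order in the Fubini step, since $\phi'$ need not be integrable on its own. This is precisely what condition \eqref{cond1} controls: under the first alternative, the hypothesis $\int |y|\int_0^y |\partial_i T_n(x_i,f+\bz)|\,dz_i\,dF_\xi(y)<\infty$ is (after the same order swap) exactly the absolute-integrability required for Fubini's theorem to apply to $\phi'(t)\,z\,\1(0\le t\le z)$ and its negative counterpart. Under the second alternative, $\partial_i T_n\ge 0$ and $m_\xi\ge 0$, so Tonelli's theorem applies directly and the identity holds in $[0,+\infty]$; but the resulting right-hand side is bounded by $\|g_\xi\|_\infty E|\partial_i T_n|$, which combined with $E|\xi_i T_n|\le(\sigma^2+E T_n^2)^{1/2}$ and a standard approximation argument (truncating $T_n$) shows the identity is in fact finite and valid. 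Once this one obstacle is cleared, the rest of the argument is routine bookkeeping.
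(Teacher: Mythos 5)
Your argument follows the paper's proof essentially step for step: expand $\|T_n-\bY\|_n^2$ to reduce to the Stein-type identity $E[\xi_i T_n(x_i,\bY)]=E[g_\xi(\xi_i)\partial_i T_n(x_i,\bY)]$, then use the fundamental theorem of calculus to write $T_n(x_i,\bY)$ as $\phi(0)$ plus an integral of $\partial_i T_n$, drop the $\phi(0)$ term via $E(\xi_i)=0$, swap the order of integration to expose $m_\xi$, and apply Assumption~(A) to convert $m_\xi(z)\,dz$ into $g_\xi(z)\,dF_\xi(z)$. The paper condenses the conditioning on $\{\xi_j\}_{j\ne i}$ into the product measure notation $dF_{\xi,i}$ and treats the Fubini justification with the same brevity as you do, so the two proofs are the same argument.
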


\begin{proof} We have
\begin{eqnarray}
E\Big(\|T_n(\bY)-f\|_n^2\Big)&=&
E\Big[\|T_n(\bY)-\bY\|_n^2+\frac{2}n\sum_{i=1}^n \xi_i
(T_n(x_i,\bY)-f(x_i))\Big]-\sigma^2\nonumber\\
&=& E\Big[\|T_n(\bY)-\bY\|_n^2+\frac{2}n\sum_{i=1}^n \xi_i
T_n(x_i,\bY)\Big]-\sigma^2.\label{l11}
\end{eqnarray}
For $\bz=(z_1,\ldots,z_n)^\T\in\RR^n$ write
$F_{\xi,i}(\bz)=\prod_{j\not=i}F_\xi(z_j)$. Since $E(\xi_i)=0$ we
have
\begin{align}
E[\xi_i T_n(x_i,\bY)]&=E\Big[\xi_i\int_0^{\xi_i}
\partial_i T_n(x_i,Y_1,\ldots,Y_{i-1},f(x_i)+z,Y_{i+1},\ldots,Y_n)\,dz\Big]\nonumber\\
&=\int_{\RR^{n-1}}\Big[\int_\RR y\int_0^y\,\partial_i T_n(x_i,f+\bz)\,dz_i\,dF_\xi(y)\Big]
\,dF_{\xi,i}(\bz).\label{l12}
\end{align}
Condition (\ref{cond1}) allows us to apply the Fubini theorem to the
expression in squared brackets on the right hand side of the last
display. Thus, using the definition of $m_\xi$ and Assumption (A) we
find
\begin{align*}
\int_{\RR_+} y\int_0^y\,\partial_i T_n(x_i,f+\bz)\,dz_i\,dF_\xi(y)&=\int_{\RR_+}\int_{z_i}^\infty y\,dF_\xi(y)\,\partial_i T_n(x_i,f+\bz)\,dz_i\\
&=\int_{\RR_+} m_\xi(z_i)\,\partial_i T_n(x_i,f+\bz)\,dz_i\\
&=\int_{\RR_+} g_\xi(z_i)\,\partial_i T_n(x_i,f+\bz)\,dF_\xi(z_i).
\end{align*}
Similar equality holds for the integral over $\RR_-$. Thus, in view
of (\ref{l12}), we obtain
\begin{eqnarray*}
E[\xi_i T_n(x_i,\bY)]=E[\partial_i T_n(x_i,\bY)\,g_\xi(\xi_i)].
\end{eqnarray*}
Combining the last display with (\ref{l11}) we get the lemma.
\end{proof}

Based on Lemma \ref{lem1}
we obtain the following bound on the risk of the
exponentially weighted aggregate $\hat f_n$.

\begin{theorem}\label{thm6}
Let $\pi$ be an element of $\scrP_\Lambda$ such that, for all
$\bY'\in \RR^n$ and $\beta>0$, the mappings
$\lambda\mapsto\theta_\lambda(\bY') f_\lambda^2(x_i)$,
$i=1,\dots,n$, are $\pi$-integrable. If Assumption (A) is fulfilled
then the aggregate $\hat f_n$ defined by (\ref{aggr}) with
$\beta\geq 4\|g_\xi\|_\infty$ satisfies the inequality
\begin{equation}\label{oracle6}
E\Big(\|\hat f_n-f\|_n^2\Big)\le \int
\|f_\lambda-f\|_n^2\,p(d\lambda)+\frac{\beta\, \mathcal
K(p,\pi)}{n}, \quad \forall \ p\in \scrP_\Lambda,
\end{equation}
where $\KL(p,\pi)$ stands for the Kullback-Leibler divergence
between $p$ and $\pi$.
\end{theorem}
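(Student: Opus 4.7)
The plan is to apply Lemma~\ref{lem1} to the estimator $T_n = \hat f_n$ and then to invoke the Donsker--Varadhan variational representation of the Kullback--Leibler divergence, recognizing $\theta\cdot\pi$ as the Gibbs measure that minimizes the associated free energy.

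First, I would compute the partial derivatives $\partial_i \hat f_n(x_i)$. A direct differentiation of $\theta_\lambda(\bY)$ with respect to $Y_i$ gives
$$
\partial_i\theta_\lambda = \frac{2}{\beta}\,\theta_\lambda\bigl(f_\lambda(x_i)-\hat f_n(x_i)\bigr),
$$
so that
$$
\partial_i\hat f_n(x_i) = \frac{2}{\beta}\int_\Lambda f_\lambda(x_i)\bigl(f_\lambda(x_i)-\hat f_n(x_i)\bigr)\theta_\lambda\,\pi(d\lambda) = \frac{2}{\beta}\,V_i,
$$
where $V_i = \int (f_\lambda(x_i)-\hat f_n(x_i))^2\theta_\lambda\,\pi(d\lambda)\ge 0$ is the variance of $f_\lambda(x_i)$ under $\theta\cdot\pi$. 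Non-negativity verifies the second alternative in (\ref{cond1}), and the integrability hypothesis on $\lambda\mapsto \theta_\lambda(\bY')f_\lambda^2(x_i)$ legitimates differentiation under the integral. Applying Lemma~\ref{lem1} and using $g_\xi(\xi_i)\le\|g_\xi\|_\infty$ together with $\beta\ge 4\|g_\xi\|_\infty$, we get
$$
E\bigl(\|\hat f_n-f\|_n^2\bigr) \le E\Bigl[\|\hat f_n-\bY\|_n^2 + \frac1n\sum_{i=1}^n V_i\Bigr]-\sigma^2.
$$

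Next, the key algebraic identity is that expanding the square gives
$$
\int_\Lambda\|f_\lambda-\bY\|_n^2\,\theta_\lambda\,\pi(d\lambda) = \int_\Lambda\|f_\lambda\|_n^2\,\theta_\lambda\,\pi(d\lambda) - 2\langle \hat f_n,\bY\rangle_n + \|\bY\|_n^2,
$$
whereas $\|\hat f_n-\bY\|_n^2 + n^{-1}\sum_i V_i$ equals the same quantity, since $n^{-1}\sum_i V_i = \int\|f_\lambda\|_n^2\theta_\lambda\pi(d\lambda)-\|\hat f_n\|_n^2$. Thus
$$
E\bigl(\|\hat f_n-f\|_n^2\bigr)\le E\!\left[\int_\Lambda\|f_\lambda-\bY\|_n^2\,\theta_\lambda\,\pi(d\lambda)\right]-\sigma^2.
$$

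Finally, I would use the fact that $\theta\cdot\pi$ is the Gibbs posterior associated with the energy $n\|\bY-f_\lambda\|_n^2/\beta$ and prior $\pi$, so by the Donsker--Varadhan (or Legendre duality) formula, for every $p\in\scrP_\Lambda$,
$$
\int_\Lambda\|f_\lambda-\bY\|_n^2\,\theta_\lambda\,\pi(d\lambda) + \frac{\beta}{n}\KL(\theta\cdot\pi,\pi) \le \int_\Lambda\|f_\lambda-\bY\|_n^2\,p(d\lambda) + \frac{\beta}{n}\KL(p,\pi).
$$
Dropping the non-negative KL term on the left, taking expectations, and using $E\|\bY-f_\lambda\|_n^2 = \|f-f_\lambda\|_n^2 + \sigma^2$ (with Fubini to interchange $E$ and the $p$-integral, which is legitimate because $p$ does not depend on $\bY$) gives exactly (\ref{oracle6}). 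The main obstacle I anticipate is the careful derivation of the identity linking $\|\hat f_n-\bY\|_n^2 + n^{-1}\sum_i V_i$ to the posterior average of $\|f_\lambda-\bY\|_n^2$; this is the mechanism by which the ``variance correction'' produced by Stein's formula is converted into the posterior free energy that the variational principle can dualize. The tuning condition $\beta\ge 4\|g_\xi\|_\infty$ is precisely what aligns the Stein correction $\frac{2}{n}\sum_i (2/\beta)V_i g_\xi(\xi_i)$ with the correction $n^{-1}\sum_i V_i$ needed for this identity to bite.
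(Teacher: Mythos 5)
Your proof is correct and follows essentially the same route as the paper: you apply Lemma~\ref{lem1} to $\hat f_n$ using the same derivative computation, rearrange via the bias--variance identity to recover the posterior average of $\|f_\lambda-\bY\|_n^2$, and close with the Gibbs variational principle. The only cosmetic difference is that you bound $g_\xi(\xi_i)\le\|g_\xi\|_\infty$ before applying the bias--variance identity whereas the paper performs the rearrangement first and then drops the non-positive term $-\sum_i(\beta-4g_\xi(\xi_i))(f_\lambda(x_i)-\hat f_n(x_i))^2/(n\beta)$; the two orderings are equivalent.
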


\begin{proof}  We will now use Lemma 1 with $T_n=\hat f_n$. Accordingly,
we write here $\hat f_n(x_i,\bY)$ instead of $\hat f_n(x_i)$.
Applying the dominated convergence theorem and taking into
account the definition of $\theta_\lambda(\bY)$ we easily find
 that the
$\pi$-integrability of
$\lambda\mapsto\theta_\lambda(\bY')
f_\lambda^2(x_i)$ for all $i$, $\bY'$ implies that the mapping $\bY\mapsto
\hat f_n(\bY)\triangleq (\hat f_n(x_1,\bY),\ldots,\hat
f_n(x_n,\bY))^\T$ is continuously differentiable. Simple algebra yields
\begin{align*}
\partial_i \hat f_n(x_i,\bY)&=\frac{2}\beta
\bigg\{\int_\Lambda  f_\lambda^2(x_i)\theta_\lambda(\bY)
\pi(d\lambda)-\hat f_n^2(x_i,\bY)\bigg\}\\
&=\frac{2}{\beta}\int_\Lambda (f_\lambda(x_i)-\hat
f_n(x_i,\bY))^2\,\theta_\lambda(\bY)\,\pi(d\lambda) \ge 0.
\end{align*}
Therefore, (\ref{cond1}) is fulfilled for $T_n=\hat f_n$
and we can apply Lemma \ref{lem1} which yields
$$E[\hat
r_n(\bY)]=E\Big(\|\hat f_n(\bY)-f\|_n^2\Big)$$ with
$$
\hat r_n(\bY)=\|\hat f_n(\bY)-\bY\|_n^2+\frac{2}n\sum_{i=1}^n \partial_i \hat f_n(x_i,\bY)\,g_\xi(\xi_i)-\sigma^2.
$$
Since $\hat f_n(\bY)$ is the expectation of $f_\lambda$ w.r.t.
the probability measure $\theta\cdot\pi$,
\begin{align*}
\|\hat f_n(\bY)-\bY\|_n^2&=\int_\Lambda \{ \|f_\lambda-\bY\|_n^2-\|f_\lambda-\hat f_n(\bY)\|_n^2\}\,\theta_\lambda(\bY)\,\pi(d\lambda).
\end{align*}
Combining these results we get
\begin{align*}
\hat r_n(\bY)&=\int_\Lambda \big\{ \|f_\lambda-\bY\|_n^2- \hbox{$
\sum_{i=1}^n\frac{(\beta-4g_\xi(\xi_i))(f_\lambda(x_i)-\hat
f_n(x_i,\bY))^2}{n\beta} $}
\big\}\,\theta_\lambda(\bY)\,\pi(d\lambda)-\sigma^2\\
&\le \int_\Lambda \|f_\lambda-\bY\|_n^2\,\theta_\lambda(\bY)\,\pi(d\lambda)-\sigma^2,
\end{align*}
where we used that $\beta\ge 4 \|g_\xi\|_\infty$. By definition of
$\theta_\lambda$,
$$-n\|f_\lambda-\bY\|_n^2\,
=\beta\log\theta_\lambda(\bY)+\beta \log\big[\int_\Lambda
e^{-n\|\bY-f_w\|_n^2/\beta}\,\pi(dw)\big].
$$
Integrating this equation over $\theta\cdot \pi$, using the fact
that $\int_\Lambda
\theta_\lambda(\bY)\,\log\theta_\lambda(\bY)\,\pi(d\lambda)
=\KL(\theta\cdot\pi,\pi)\ge~0$ and convex duality argument (cf.,
e.g., \cite{dz}, p.264, or \cite{catbook:01}, p.160) we get
\begin{align*}
\hat r_n(\bY)&\le
-\frac{\beta}n\log\bigg[\int_\Lambda e^{-n\|\bY-f_w\|_n^2/\beta}\,\pi(dw)\bigg]-\sigma^2\\
&\le \int_{\Lambda}\|\bY-f_w\|_n^2\, p(dw)+\frac{\beta\KL(p,\pi)}n-
\sigma^2
\end{align*}
for all $p\in \scrP_\Lambda$. Taking expectations in the last
inequality we obtain (\ref{oracle6}).
\end{proof}

\section{Risk bounds for $n$-divisible distributions of errors}
\label{sec3}

In this section we present a second approach to prove sharp risk
bounds of the form (\ref{oracle6}). The main idea of the
proof consists in an artificial introduction of a ``dummy"
random vector $\zzeta\in \RR^n$ independent of
$\xxi=(\xi_1,\ldots,\xi_n)$ and having the same type of distribution as
$\xxi$. This approach will allow us to cover the class of
distributions of $\xi_i$ satisfying the following assumption.
\begin{itemize}
\item[\bf (B)] {\it There exist i.i.d.\ random variables $\zeta_1,\ldots,\zeta_n$
defined on an enlargement of the probability space
$(\Omega,\calF,P)$ such that:\vspace{1mm}
\begin{itemize}
\item[(B1)] the random variable $\xi_1+\zeta_1$ has the same
distribution as $(1+1/n)\xi_1$,\vspace{1mm}
\item[(B2)] the vectors $\zzeta=(\zeta_1,\ldots,\zeta_n)$ and
$\xxi=(\xi_1,\ldots,\xi_n)$ are independent.
\end{itemize}
}
\end{itemize}
If $\xi_1$ satisfies (B1), then we will say that its distribution is
\textit{$n$-divisible}.

We will need one more assumption. Let
$L_\zeta:\RR\to\RR\cup\{\infty\}$ be the moment generating function
of the random variable $\zeta_1$, i.e.,
$L_\zeta(t)=E(e^{t\zeta_1})$, $t\in\RR$.
\begin{itemize}
\item[\bf (C)]{\it
There exist a functional $\Psi_\beta:\pp\times\pp\to\RR$ and a real
number $\beta_0>0$ such that
\begin{align}\label{Psi}
\ \qquad\begin{cases}
e^{(\|f-\f_{\mu'}\|_n^2-\|f-\f_{\mu}\|_n^2)/\beta}
\prod_{i=1}^{n}L_{\zeta}\Big(\frac{2(\f_{\mu}(x_i)-\f_{\mu'}(x_i))}{\beta}\Big)
\leq \Psi_\beta(\mu,\mu'),\\
\mu\mapsto \Psi_\beta(\mu,\mu')\ \text{is concave and continuous in
the total }\\\text{variation norm for any }
\mu'\in\pp,\\
\Psi_\beta(\mu,\mu)=1,
\end{cases}
\end{align}
for any $\beta\ge\beta_0$.
}
\end{itemize}
We now discuss some sufficient conditions for assumptions (B) and
(C). Denote by $\mathcal D_n$ the set of all probability
distributions of $\xi_1$ satisfying assumption (B1). First, it is
easy to see that all the zero-mean Gaussian or double exponential
distributions belong to $\mathcal D_n$. Furthermore, $\mathcal D_n$
contains all the stable distributions. However, since the non-Gaussian
stable distributions do not have second order moments, they do not
satisfy (\ref{Psi}). One can also check that the convolution of two
distributions from $\mathcal D_n$ belongs to $\mathcal D_n$.
Finally, note that the intersection $\mathcal D=\cap_{n\geq
1}\mathcal D_n$ is included in the set of all infinitely divisible
distributions and is called the L-class (see \cite{p95},
Theorem~3.6, p.~102).

However, some basic distributions such as the uniform or the
Bernoulli distribution do not belong to $\mathcal D_n$. To show
this, let us recall that the characteristic function of the uniform
on $[-a,a]$ distribution is given by $\varphi(t)=\sin(at)/(\pi at)$.
For this function, $\varphi((n+1)t)/\varphi(nt)$ is equal to
infinity at the points where $\sin(nat)$ vanishes (unless $n=1$).
Therefore, it cannot be a characteristic function. Similar argument
shows that the centered Bernoulli and centered binomial
distributions do not belong to $\mathcal D_n$.

Assumption (C) can be readily checked when the
moment generating function $L_\zeta(t)$ is locally sub-Gaussian,
i.e., there exists a constant $c>0$ such that the inequality
$L_\zeta(t)\le e^{ct^2}$ holds for sufficiently small values of $t$.
Examples include all the zero-mean distributions with bounded
support, the Gaussian and double-exponential distributions, etc. The
validity of Assumption (C) for such distributions follows from
Lemma~\ref{lem3} in the Appendix.

\begin{theorem}\label{thm1}
Let $\pi$ be an element of $\scrP_\Lambda$ such that, for all
$\bY'\in \RR^n$ and $\beta>0$, the mappings
$\lambda\mapsto\theta_\lambda(\bY') f_\lambda(x_i)$, $i=1,\dots,n$,
are $\pi$-integrable. If assumptions (B) and (C) are fulfilled, then
the aggregate $\hat f_n$ defined by (\ref{aggr}) with
$\beta\geq\beta_0$ satisfies the inequality
\begin{equation}\label{oracle}
E\Big(\|\hat f_n-f\|_n^2\Big)\le \int
\|f_\lambda-f\|_n^2\,p(d\lambda)+\frac{\beta\, \mathcal
K(p,\pi)}{n+1}, \quad \forall \ p\in \scrP_\Lambda.
\end{equation}
\end{theorem}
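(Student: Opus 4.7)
My plan is to establish (\ref{oracle}) via a Gibbs free-energy bound on an enlarged probability space carrying the dummy vector $\zzeta$ from assumption (B).

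First, I would reduce (\ref{oracle}) to a single deterministic inequality in expectation. By the Donsker--Varadhan variational principle applied at temperature $\beta/(n+1)$ with $h(\lambda)=-\|\bY-f_\lambda\|_n^2$, one has for every $p\in\scrP_\Lambda$
$$-\tfrac{\beta}{n+1}\log\!\int_\Lambda e^{-(n+1)\|\bY-f_w\|_n^2/\beta}\pi(dw)\le \int_\Lambda\|\bY-f_w\|_n^2\,p(dw)+\tfrac{\beta\KL(p,\pi)}{n+1}.$$
Taking expectations, the right-hand side becomes $\int\|f-f_w\|_n^2\,p(dw)+\sigma^2+\beta\KL(p,\pi)/(n+1)$, so (\ref{oracle}) reduces to the single inequality
$$E\|\hat f_n-f\|_n^2+\sigma^2\le E\!\left[-\tfrac{\beta}{n+1}\log\!\int_\Lambda e^{-(n+1)\|\bY-f_w\|_n^2/\beta}\pi(dw)\right].$$

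The main work is to prove this inequality, and this is where (B) and (C) enter. The aggregate $\hat f_n$ corresponds to a posterior at temperature $\beta/n$, whereas the RHS above is a free energy at temperature $\beta/(n+1)$. I would bridge this gap using $\zzeta$: on the enlarged space, the independence of $\zzeta$ and $\xxi$ together with (B1) (i.e.\ $\xxi+\zzeta\stackrel{d}{=}\tfrac{n+1}n\xxi$) allow one to rewrite an exponential at scale $(n+1)/\beta$ as an exponential at scale $n/\beta$ times $\prod_i L_\zeta\!\big(2(f_w(x_i)-\bar f_\mu(x_i))/\beta\big)$ for $\mu=\theta\cdot\pi$. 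Assumption (C) with $\bar f_\mu=\hat f_n$ then bounds this product by $\Psi_\beta(\mu,\mu')\,e^{(\|f-\bar f_\mu\|_n^2-\|f-\bar f_{\mu'}\|_n^2)/\beta}$ for any competitor $\mu'\in\pp$. Concavity of $\mu\mapsto\Psi_\beta(\mu,\mu')$, Jensen's inequality applied in the first slot of $\Psi_\beta$, and the diagonal identity $\Psi_\beta(\mu,\mu)=1$ then collapse the $\Psi_\beta$ factor, leaving exactly the needed bound.

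The main technical obstacle is the algebraic ``upgrade'' step: one has to expand the quadratic $\|\bY-f_w\|_n^2$ so that, after integrating out $\zzeta$ via the mgf $L_\zeta$, the correction factors line up exactly with the left-hand side of (C), including the coefficient $2/\beta$ and the shift by $\bar f_\mu(x_i)$. Once this bookkeeping is done, the role of (B1) is to effectively replace the variance $\sigma^2$ by $(1+1/n)^2\sigma^2$, which is precisely what produces the denominator $n+1$ (instead of $n$) in the KL-penalty. The continuity of $\Psi_\beta$ in total variation is invoked at the end to pass from Dirac approximations of $\mu'$ to genuine probability measures $p$, and the condition $\beta\ge\beta_0$ ensures the validity of the bound in (C) throughout.
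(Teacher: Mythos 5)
Your plan correctly reproduces the last stage of the paper's argument: the Donsker--Varadhan step you invoke is precisely how the paper passes from its bound $S\le -\frac{\beta}{n+1}E\log\int_\Lambda e^{-(n+1)\rho(\lambda)}\pi(d\lambda)$ (where $\rho(\lambda)=(\|f-f_\lambda\|_n^2-2n^{-1}\xxi^\T\bh_\lambda)/\beta$) to the final inequality. You also correctly identify the ingredients (B1), (C), concavity of $\Psi_\beta$, and Jensen. But your sketch of the ``upgrade'' conflates two mechanisms that the paper keeps separate, and as a result the plan cannot close as written. The paper writes $E\|\hat f_n-f\|_n^2=S+S_1$ by inserting a gratuitous term $-2\zzeta^\T\bH_{\theta\cdot\pi}/\beta$ (legitimate because (B2) and $E\zeta_i=0$ make its expectation vanish), and the two pieces are bounded by \emph{different} arguments. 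In $S$, the dummy $\zzeta$ is used only through the distributional identity $\xxi+\zzeta\stackrel{d}{=}(1+\tfrac1n)\xxi$, which turns a partition function at temperature $\beta/n$ into one at temperature $\beta/(n+1)$ via an application of H\"older's inequality, $\int_\Lambda e^{-n\rho}\,d\pi\le\big(\int_\Lambda e^{-(n+1)\rho}\,d\pi\big)^{n/(n+1)}$. In $S_1$, $\zzeta$ is integrated out conditionally on $\xxi$ (here independence enters again), producing the product $\prod_i L_\zeta(\cdot)$ that is then dominated by $\Psi_\beta$ via (C), concavity and Jensen to give $S_1\le 0$.

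Your description presents the mgf factor $\prod_i L_\zeta(\cdot)$ as the device that converts the $n$-scale exponential into an $(n+1)$-scale one; it is not. That conversion comes from the distributional identity plus H\"older, and the H\"older inequality is nowhere in your sketch. Without it, there is no path from the posterior (defined with weights $e^{-n\|\bY-f_\lambda\|_n^2/\beta}$) to the $(n+1)$-scale free energy you have reduced the theorem to. The $L_\zeta$ product and assumption (C) instead kill a \emph{separate} residual term ($S_1$), namely the cross term between $\zzeta$ and the deviation $\bh_\lambda-\bH_{\theta\cdot\pi}$. Finally, the role you assign to total-variation continuity (``pass from Dirac approximations of $\mu'$ to genuine $p$'') is also off: it is needed to make Jensen's inequality valid for the concave functional $\mu\mapsto\Psi_\beta(\mu,\theta\cdot\pi)$ on the infinite-dimensional set $\scrP'_\Lambda$, in the step showing $\int\Psi_\beta(\delta_\lambda,\theta\cdot\pi)\,(\theta\cdot\pi)(d\lambda)\le\Psi_\beta(\theta\cdot\pi,\theta\cdot\pi)=1$, not to approximate a competitor measure $p$.
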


\begin{proof}
Define the mapping $\bH:\pp\to\RR^n$ by
$$\bH_\mu=(\f_\mu(x_1)-f(x_1),\ldots,\f_\mu(x_n)-f(x_n))^\T,\quad
\mu\in\pp.$$ For brevity, we will write
$$\bh_\lambda=\bH_{\delta_\lambda}=(f_\lambda(x_1)-f(x_1),\ldots,
f_\lambda(x_n)-f(x_n))^\T, \quad \lambda\in\Lambda,$$ where
$\delta_\lambda$ is the Dirac measure at $\lambda$ (that is
$\delta_\lambda(A)=\1(\lambda\in A)$ for any $A\in\calA$).

Since $E(\xi_i)=0$, assumption (B1) implies that $E(\zeta_i)=0$ for
$i=1,\ldots,n$. On the other hand, (B2) implies that $\zzeta$ is
independent of $\theta_\lambda$. Therefore, we have
\begin{align}\label{1new1}
E\Big(\|\f_{\theta\cdot\pi}-f\|_n^2\Big) &= \beta
E\log\exp\Big\{
\frac{\|\f_{\theta\cdot\pi}-f\|_n^2-2\zzeta^\T\bH_{\theta\cdot\pi}}
{\beta}\Big\}
=S+S_1
\end{align}
where
\begin{align*}
&S=-\beta E\log\int_\Lambda \theta_\lambda\exp\Big\{-
\frac{\|f_\lambda-f\|_n^2-2\zzeta^\T \bh_{\lambda}}{\beta}\Big\}
\pi(d\lambda),\\
&S_1=\beta E\log\int_\Lambda  \theta_\lambda \exp\Big\{
\frac{\|\f_{\theta\cdot\pi}-f\|_n^2-
\|f_\lambda-f\|_n^2
+2\zzeta^\T(\bh_{\lambda}-\bH_{\theta\cdot\pi})}{\beta}\Big\}\pi(d\lambda).
\end{align*}
The definition of $\theta_\lambda$ yields
\begin{align}\label{eq1S}
S&=-\beta E\log\int_\Lambda\exp\Big\{-
\frac{n\|\bY-f_\lambda \|_n^2+\|f_\lambda -f\|_n^2-2\zzeta^\T\bh_{\lambda}}{\beta}\Big\}\pi(d\lambda)\nonumber\\
&\quad+\beta E\log\int_\Lambda \exp\Big\{- \frac{n\|\bY-f_\lambda
\|_n^2}{\beta}\Big\}\pi(d\lambda).
\end{align}
Since $\|\bY-f_\lambda
\|_n^2=\|\xxi\|_n^2-2n^{-1}\xxi^\T\bh_{\lambda}+\|f_\lambda
-f\|_n^2$, we get
\begin{align}\label{eq2S}
S&=-\beta E\log\int_\Lambda \exp\Big\{- \frac{(n+1)\|f_\lambda
-f\|_n^2 -2(\xxi+\zzeta)^\T
\bh_{\lambda}}{\beta}\Big\}\pi(d\lambda)\nonumber\\
&\quad+\beta E\log\int_\Lambda \exp\Big\{- \frac{n\|f-f_\lambda
\|_n^2-2\xxi^\T \bh_{\lambda}}{\beta}\Big\}
\pi(d\lambda)\nonumber\\
&=\beta E\log \int_\Lambda  e^{-n\rho(\lambda)}\pi(d\lambda)- \beta
E\log \int_\Lambda  e^{-(n+1)\rho(\lambda)}\pi(d\lambda),
\end{align}
where we used the notation $\rho(\lambda)=(\|f-f_\lambda
\|_n^2-2n^{-1}\xxi^\T \bh_{\lambda})/\beta$ and the fact that
$\xxi+\zzeta$ can be replaced by $(1+1/n)\xxi$ inside the
expectation. The H\"older inequality implies that $\int_\Lambda
e^{-n\rho(\lambda)}\pi(d\lambda)\leq (\int_\Lambda
e^{-(n+1)\rho(\lambda)}\pi(d\lambda))^ \frac{n}{n+1}$. Therefore,
\begin{equation}\label{S2}
S\leq -\frac\beta{n+1} E\log \int_\Lambda
e^{-(n+1)\rho(\lambda)}\,\pi(d\lambda).
\end{equation}

Assume now that $p\in\scrP_\Lambda$ is absolutely continuous with
respect to $\pi$. Denote by $\phi$ the corresponding Radon-Nikodym
derivative and by $\Lambda_+$ the support of $p$. Using the
concavity of the logarithm and Jensen's inequality we get
\begin{align*}
-E\log\int_\Lambda e^{-(n+1)\rho(\lambda)}\pi(d\lambda)&\le -E\log\int_{\Lambda_+} e^{-(n+1)\rho(\lambda)}\pi(d\lambda)\\
&=-E\log\int_{\Lambda_+} e^{-(n+1)\rho(\lambda)}\phi^{-1}(\lambda)\,p(d\lambda)\\
&\le
(n+1)E\int_{\Lambda_+}\rho(\lambda)\,p(d\lambda)+\int_{\Lambda_+}
\log \phi(\lambda)\,p(d\lambda).
\end{align*}
Noticing that the last integral here equals to $\mathcal K(p,\pi)$
and combining the resulting inequality with (\ref{S2}) we obtain
$$
S\leq \beta E\int_\Lambda
\rho(\lambda)\,p(d\lambda)+\frac{\beta\,\mathcal K(p,\pi)}{n+1}.
$$
Since $E(\xi_i)=0$ for every $i=1,\ldots,n$, we have $\beta
E(\rho(\lambda))=\|f_\lambda-f\|_n^2$, and using the Fubini theorem
we find
\begin{align}\label{Seq}
S\leq \int_\Lambda
\|f_\lambda-f\|_n^2\,p(d\lambda)+\frac{\beta\,\mathcal
K(p,\pi)}{n+1}\,.
\end{align}
Note that this inequality also holds  in the case where $p$ is not
absolutely continuous with respect to $\pi$, since in this case
$\mathcal K(p,\pi)=\infty$.

To complete the proof, it remains to show that $S_1\le 0$. Let
$E_{\xxi}(\cdot)$ denote the conditional expectation
$E(\cdot|\xxi)$. By the concavity of the logarithm,
$$
S_1\le\beta E\log\int_\Lambda \theta_\lambda E_{\xxi}\exp\Big\{
\frac{\|\f_{\theta\cdot\pi}-f\|_n^2-\|f_{\lambda}-f\|_n^2+
2\zzeta^\T(\bh_{\lambda}-\bH_{\theta\cdot\pi})}{\beta}\Big\}\pi(d\lambda).
$$
Since $f_\lambda=\f_{\delta_\lambda}$ and $\zzeta$ is independent of
$\theta_\lambda$, the last expectation on the right hand side of
this inequality is bounded from above by
$\Psi_\beta(\delta_\lambda,\theta\cdot\pi)$. Now, the  fact that
$S_1\le 0$ follows from the concavity and continuity of the
functional $\Psi_\beta(\cdot,\theta\cdot\pi)$, Jensen's
inequality and the equality
$\Psi_\beta(\theta\cdot\pi,\theta\cdot\pi)=1$.
\end{proof}

Another way to read the results of Theorems~\ref{thm6} and
\ref{thm1} is that, if the ``phantom'' Gaussian error model
(\ref{phm}) with variance taken larger than a certain threshold
value is used to construct the Bayesian posterior mean $\hat f_n$,
then $\hat f_n$ is close on the average to the best prediction under
the true model, even when the true data generating distribution is
non-Gaussian.

We now illustrate application of Theorem \ref{thm1} by an example.
Assume that the errors $\xi_i$ are double exponential, that is the
distribution of $\xi_i$ admits a density with respect to the
Lebesgue measure given by
$$
f_\xi(x)=\frac1{\sqrt{2\sigma^2}}\;e^{-\sqrt{2}|x|/\sigma},\quad
x\in\RR.
$$
Aggregation under this assumption is discussed in \cite{y03} where
it is recommended to modify the weights (\ref{ag1}) matching them to
the shape of $f_\xi$. For such a procedure \cite{y03} proves an
oracle inequality with leading constant which is greater than 1. The
next proposition shows that sharp risk bounds (i.e., with leading
constant 1) can be obtained without modifying the weights
(\ref{ag1}).

\begin{proposition}\label{prop2}
Assume that $\sup_{\lambda\in\Lambda}\|f-f_\lambda \|_n\le L<\infty$
and $\sup_{i,\lambda}|f_\lambda(x_i)|\le \bar L<\infty$. Let the
random variables $\xi_i$ be i.i.d.\ double exponential with variance
$\sigma^2>0$. Then for any $\beta$ larger than
$$
\max\bigg( \bigg(8+\frac4n\bigg)\sigma^2+2L^2 ,\
4\sigma\bigg(1+\frac1n\bigg){\bar L} \bigg)
$$ the aggregate $\hat f_n$ satisfies inequality (\ref{oracle}).
\end{proposition}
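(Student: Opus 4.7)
The plan is to apply Theorem~\ref{thm1}, which requires verifying assumptions (B) and (C) for the centered double exponential distribution with variance $\sigma^2$, and identifying $\beta_0$ as the maximum given in the statement.

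For (B), I would argue at the level of characteristic functions. The double exponential has $\varphi_\xi(t)=(1+\sigma^2 t^2/2)^{-1}$, so
\[
\varphi_\zeta(t):=\frac{\varphi_\xi((1+1/n)t)}{\varphi_\xi(t)}=(1-q)+q\cdot\frac{1}{1+(1+1/n)^2\sigma^2 t^2/2},\qquad q=\frac{2n+1}{(n+1)^2},
\]
is a convex combination of $1$ and the characteristic function of a double exponential with variance $(1+1/n)^2\sigma^2$, hence itself a valid characteristic function. The corresponding $\zeta_1$ is the mixture that equals $0$ with probability $1-q$ and is double exponential with variance $(1+1/n)^2\sigma^2$ with probability $q$. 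Taking $\zeta_1,\ldots,\zeta_n$ i.i.d.\ on an enlargement of $(\Omega,\calF,P)$ independent of $\xxi$ then gives (B).

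For (C), I would start from the explicit moment generating function
\[
L_\zeta(t)=\frac{1-\sigma^2 t^2/2}{1-(1+1/n)^2\sigma^2 t^2/2},\qquad |t|<\sqrt{2}\,n/((n+1)\sigma).
\]
Rewriting $L_\zeta(t)=1+\alpha\sigma^2 t^2/\bigl(2(1-(1+1/n)^2\sigma^2 t^2/2)\bigr)$ with $\alpha=(2n+1)/n^2$, the bound $\ln(1+x)\leq x$ together with $(1-y)^{-1}\leq 2$ on $y\in[0,1/2]$ yields the local sub-Gaussian control
\[
L_\zeta(t)\leq\exp(\alpha\sigma^2 t^2),\qquad |t|\leq n/((n+1)\sigma).
\]
Substituting $t_i=2(\f_\mu(x_i)-\f_{\mu'}(x_i))/\beta$ and using $|\f_\mu(x_i)-\f_{\mu'}(x_i)|\leq 2\bar L$, the range condition on $|t_i|$ becomes $\beta\geq 4(1+1/n)\sigma\bar L$, which is the second quantity in the max. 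Multiplying over $i$ and combining with the prefactor, the left-hand side of (\ref{Psi}) is bounded by $\exp(\chi(\mu))$, where, after the identity $\|\f_\mu-\f_{\mu'}\|_n^2-\|f-\f_\mu\|_n^2=(\text{affine in }\mu)$ (the $\|\f_\mu\|_n^2$ terms cancel),
\[
\chi(\mu)=-\frac{1}{\beta}\Big(1-\frac{\kappa}{\beta}\Big)\|f-\f_\mu\|_n^2+(\text{affine in }\mu),\qquad \kappa=4\alpha n\sigma^2=(8+4/n)\sigma^2.
\]
Since $\|f-\f_\mu\|_n^2$ is convex in $\mu$, $\chi$ is concave in $\mu$ as soon as $\beta\geq\kappa$.

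The remaining step is to invoke Lemma~\ref{lem3} of the Appendix, whose role is precisely to convert locally sub-Gaussian control such as the one above into a functional $\Psi_\beta$ that is concave in total variation and normalized by $\Psi_\beta(\mu,\mu)=1$. The additional $2L^2$ in the first threshold arises at this step: because concavity of $\log\Psi_\beta$ does not entail concavity of $\Psi_\beta$ itself, Lemma~\ref{lem3} replaces $e^{\chi(\mu)}$ by a concave upper bound on the bounded interval that $\chi$ visits, and uses $\|f-\f_{\mu'}\|_n^2\leq L^2$ to absorb the extra quadratic slack, which costs an additive $2L^2$ in the constraint on $\beta$. This is the main technical obstacle, and it is exactly what Lemma~\ref{lem3} packages. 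Once it is granted, Theorem~\ref{thm1} applies with $\beta_0=\max((8+4/n)\sigma^2+2L^2,\ 4(1+1/n)\sigma\bar L)$ and yields inequality~(\ref{oracle}).
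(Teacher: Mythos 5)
Your proof is correct and follows essentially the same route as the paper: compute $L_\zeta$ from the characteristic-function ratio, derive the local sub-Gaussian bound $\log L_\zeta(t)\le(2n+1)(\sigma t/n)^2$ on $|t|\le n/((n+1)\sigma)$, substitute $t_i=2(\f_\mu(x_i)-\f_{\mu'}(x_i))/\beta$ to obtain the explicit $\Psi_\beta$, and invoke Lemma~\ref{lem3}; the thresholds $4(1+1/n)\sigma\bar L$ and $(8+4/n)\sigma^2+2L^2$ come out identically. One thing you do more carefully than the paper: the explicit mixture representation of $\zeta_1$ (mass $1-q$ at $0$, mass $q=(2n+1)/(n+1)^2$ on a double exponential of variance $(1+1/n)^2\sigma^2$) actually verifies assumption (B), whereas the paper's proof just reads $\varphi_\zeta$ off the functional equation and relies on an earlier remark that double exponentials lie in $\mathcal D_n$. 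One imprecision in your account of the last step: Lemma~\ref{lem3} does not ``replace $e^{\chi(\mu)}$ by a concave upper bound on the bounded interval that $\chi$ visits''; it proves directly that the explicit exponential $\Psi_\beta(\cdot,\mu')$ is concave by reducing to the matrix inequality $\beta\nabla^2 Q(\mu)-\nabla Q(\mu)\nabla Q(\mu)^\top\succeq 0$, and the quantity bounded by $L^2$ there is $\sup_{\lambda\in\Lambda}\|f-f_\lambda\|_n^2$ (extremal over Dirac measures, via the trace bound on $\nabla Q\nabla Q^\top$), not $\|f-\f_{\mu'}\|_n^2$. Your underlying intuition—that the $2L^2$ pays for the gap between concavity of $\log\Psi_\beta$ and concavity of $\Psi_\beta$ itself—is nonetheless the right one, and the conclusion stands.
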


\begin{proof}
We apply Theorem~\ref{thm1}. The characteristic function of the
double exponential density is $\varphi(t)=2/ (2+\sigma^2t^2)$.
Solving $\varphi(t)\varphi_\zeta(t)=\varphi((n+1)t/n)$ we get the
characteristic function $\varphi_\zeta$ of $\zeta_1$. The
corresponding Laplace transform $L_\zeta$ in this case is
$L_\zeta(t)=\varphi_\zeta(-{\rm i}t)$, which yields
$$
L_\zeta(t)=1+\frac{(2n+1)\sigma^2 t^2}{2n^2-(n+1)^2\sigma^2 t^2}.
$$
Therefore
$$
\log L_\zeta(t)\leq (2n+1)(\sigma t/n)^2, \quad |t|\leq
\frac{n}{(n+1)\sigma}\,.
$$
We now use this inequality to check assumption (C). Let $\beta$ be
larger than $4\sigma\big(1+1/n\big){\bar L}$. Then for all
$\mu,\mu'\in\scrP_\Lambda$ we have
$$
\frac{2\big|\f_{\mu}(x_i)-\f_{\mu'}(x_i)\big|}{\beta}\le
\frac{4{\bar L}}\beta\le \frac{n}{(n+1)\sigma}, \quad i=1,\dots,n,
$$
and consequently
$$
\log
L_\zeta\bigg(2\big|\f_{\mu}(x_i)-\f_{\mu'}(x_i)\big|/\beta\bigg)\leq
\frac{4\sigma^2(2n+1)(\f_{\mu}(x_i) -\f_{\mu'}(x_i))^2}{n^2\beta^2}\
.
$$
This implies that
$$
\exp{\Big(\frac{\|f-\f_{\mu'}\|_n^2-\|f-\f_{\mu}\|_n^2}{\beta}\Big)}
\prod_{i=1}^{n}L_{\zeta}\Big(\frac{2(\f_{\mu}(x_i)-\f_{\mu'}(x_i))}{\beta}\Big)
\leq \Psi_\beta(\mu,\mu'),
$$
where
$$
\Psi_\beta(\mu,\mu')=\exp\bigg(\frac{\|f-\f_{\mu'}\|_n^2-\|f-\f_{\mu}
\|_n^2}{\beta}+\frac{4\sigma^2(2n+1)\|\f_{\mu}
-\f_{\mu'}\|_n^2}{n\beta^2} \bigg).
$$
This functional satisfies $\Psi_\beta(\mu,\mu)=1$, and it is not
hard to see that the mapping $\mu\mapsto\Psi_\beta(\mu,\mu')$ is
continuous in the total variation norm. Finally, this mapping is
concave for every $\beta\geq
(8+4/n)\sigma^2+2\sup_\lambda\|f-f_\lambda \|^2_n$ by virtue of
Lemma~\ref{lem3} in the Appendix. Therefore, assumption (C) is
fulfilled and the desired result follows from Theorem~\ref{thm1}.

\end{proof}

An argument similar to that of Proposition \ref{prop2} can be used
to deduce from Theorem \ref{thm1} that if the random variables
$\xi_i$ are i.i.d.\ Gaussian $\mathcal N(0,\sigma^2)$, then
inequality (\ref{oracle}) holds for every $\beta\geq
(4+2/n)\sigma^2+2L^2$ (cf.\ \cite{dt07}). However, in this
Gaussian framework we can also apply Theorem \ref{thm6} that
gives better result: essentially the same inequality (the only
difference is that the Kullback divergence is divided by $n$ and
not by $n+1$) holds for $\beta\geq 4\sigma^2$, with no assumption
on the function $f$.

\section{Model selection with finite or countable $\Lambda$}
\label{sec4}

Consider now the particular case where $\Lambda$ is countable.
W.l.o.g. we suppose that $\Lambda=\{1,2,\dots\}$, $\{f_\lambda,
\lambda\in\Lambda\}=\{f_j\}_{j=1}^\infty$ and we set
$\pi_j\triangleq\pi(\lambda=j)$. As a corollary of
Theorem~\ref{thm1} we get the following sharp oracle inequalities
for model selection type aggregation.
\begin{theorem}\label{modsel}
Let either assumptions of Theorem \ref{thm6} or those of Theorem
\ref{thm1} be satisfied and let $\Lambda$ be countable. Then for any
$\beta\ge \beta_0$ the aggregate $\hat f_n$ satisfies the inequality
$$
E\Big(\|\hat f_n-f\|_n^2\Big)\le \inf_{j\ge1}
\bigg(\|f_j-f\|_n^2+\frac{\beta\, \log\pi_j^{-1}}{n}\bigg)
$$
where $\beta_0=4\|g_\xi\|_\infty$ when Theorem \ref{thm6} is
applied. In particular, if $\pi_j=1/M$, $j=1,\dots,M$, we have, for
any $\beta\ge \beta_0$,
\begin{equation}\label{ms}
E\Big(\|\hat f_n-f\|_n^2\Big)\le \min_{j=1,\ldots,M}
\|f_j-f\|_n^2+\frac{\beta\, \log M}{n}.
\end{equation}
\end{theorem}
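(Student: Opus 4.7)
The statement follows as a direct corollary of the PAC-Bayesian bounds, either (\ref{oracle6}) from Theorem~\ref{thm6} or (\ref{oracle}) from Theorem~\ref{thm1}. The plan is simply to specialize the free measure $p\in\scrP_\Lambda$ appearing in those bounds to a Dirac mass, and then to take an infimum.

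Concretely, fix $j\ge 1$ with $\pi_j>0$ and set $p=\delta_j$, the Dirac measure at the integer $j$. Under this choice one has
$$
\int_\Lambda \|f_\lambda-f\|_n^2\,\delta_j(d\lambda)=\|f_j-f\|_n^2,
$$
and the Kullback--Leibler divergence collapses to
$$
\KL(\delta_j,\pi)=\log\frac{1}{\pi_j}=\log\pi_j^{-1},
$$
since $\delta_j$ is absolutely continuous with respect to $\pi$ with Radon--Nikodym derivative equal to $1/\pi_j$ at $j$. Plugging this into (\ref{oracle}) (respectively (\ref{oracle6})) and using $1/(n+1)\le 1/n$ to harmonize the constant, we obtain
$$
E\bigl(\|\hat f_n-f\|_n^2\bigr)\le \|f_j-f\|_n^2+\frac{\beta\,\log\pi_j^{-1}}{n}.
$$
Taking the infimum over $j\ge 1$ (the indices $j$ with $\pi_j=0$ can be ignored since they make the bound vacuous) yields the first inequality of the theorem. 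The second inequality is the specialization to $\pi_j=1/M$ for $j=1,\ldots,M$, which gives $\log\pi_j^{-1}=\log M$ uniformly in $j$, so the infimum reduces to the minimum of $\|f_j-f\|_n^2$.

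There is essentially no obstacle: the argument is pure bookkeeping once the PAC-Bayesian bound is in hand. The only minor point worth verifying is that the Dirac measure $\delta_j$ satisfies the integrability hypothesis on $p$ implicit in Theorems~\ref{thm6} and \ref{thm1} (trivially true, as integration against $\delta_j$ is evaluation at $j$), and that the constant $\beta_0$ under which the theorem applies is the one inherited from the chosen parent theorem, namely $\beta_0=4\|g_\xi\|_\infty$ when Theorem~\ref{thm6} is invoked and the $\beta_0$ of Assumption (C) when Theorem~\ref{thm1} is invoked.
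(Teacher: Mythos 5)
Your proof is correct and coincides with the paper's own argument: both specialize the PAC-Bayesian bound to a Dirac measure $p=\delta_{j}$, compute $\KL(\delta_{j},\pi)=\log\pi_{j}^{-1}$, use $1/(n+1)\le 1/n$ when invoking Theorem~\ref{thm1}, and take the infimum over $j$.
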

\begin{proof}
For a fixed integer $j_0\ge1$ we apply Theorems~\ref{thm6} or
\ref{thm1} with $p$ being the Dirac measure: $p(\lambda=j)=\1
(j=j_0),\, j\ge1$. This gives
$$
E\Big(\|\hat f_n-f\|_n^2\Big)\le  \|f_{j_0}-f\|_n^2+\frac{\beta\,
\log\pi_{j_0}^{-1}}{n}.
$$
Since this inequality holds for every $j_0$, we obtain the first
inequality of the proposition. The second inequality is an obvious
consequence of the first one.
\end{proof}

Theorem \ref{modsel} generalizes the result of \cite{lb06} where
the case of finite $\Lambda$ and Gaussian errors $\xi_i$ is
treated. For this case it is known that the rate of convergence
$(\log M)/n$ in (\ref{ms}) cannot be improved
\cite{tsy:03,btw07a}. Furthermore, for the examples (i) -- (iii)
of Section \ref{sec2} (Gaussian or bounded errors) and finite
$\Lambda$, inequality (\ref{ms}) is valid with no assumption on
$f$ and $f_\lambda$. Indeed, when $\Lambda$ is finite the
integrability conditions are automatically satisfied. Note that,
for bounded errors $\xi_i$, oracle inequalities of the form
(\ref{ms}) are also established in the theory of prediction of
deterministic sequences \cite{v:90,lw94,cetal, kw99, lcb:06}.
However, those results require uniform boundedness not only of the
errors $\xi_i$ but also of the functions $f$ and $f_\lambda$.
What is more, the minimal allowed values of $\beta$ in those
works depend on an upper bound on $f$ and $f_\lambda$ which is not
always available. The version of (\ref{ms}) based on Theorem
\ref{thm6} is free of such a dependence.


\section{Risk bounds for general distributions of errors}
\label{sec5}

As discussed above, assumption (B) restricts the application of
Theorem \ref{thm1} to models with $n$-divisible errors. We now
show that this limitation can be dropped. The main idea of the
proof of Theorem~\ref{thm1} was to introduce a dummy random vector
$\zzeta$ independent of $\xxi$. However, the independence
property is stronger than what we really need in the proof of
Theorem~\ref{thm1}. Below we come to a weaker condition invoking
a version of the Skorokhod embedding (a detailed survey on this
subject can be found in \cite{obloj}).

For simplicity we assume that the errors $\xi_i$ are symmetric,
i.e., $P(\xi_i>a)=P(\xi_i<-a)$ for all $a\in\RR$. The argument can
be adapted to the asymmetric case as well, but we do not discuss it
here.

First, we describe a version of Skorokhod's construction that will
be used below, cf.\ \cite[Proposition II.3.8]{RevYor}.

\begin{lemma}\label{l1} Let $\xi_1,\ldots,\xi_n$ be i.i.d. symmetric random variables
on $(\Omega,\calF,P)$. Then there exist i.i.d.\ random variables
$\zeta_1,\ldots,\zeta_n$ defined on an enlargement of the
probability space $(\Omega,\calF,P)$ such that
\begin{itemize}
\item[(a)] $\xxi+\zzeta$ has the same distribution as $(1+1/n)\xxi$.
\item[(b)] $E(\zeta_i|\xxi)=0$, $i=1,\dots,n$,
\item[(c)]
for any $\lambda>0$ and for any $i=1,\ldots,n$, we have
$$
E(e^{\lambda\zeta_i}|\xxi)\le e^{(\lambda\xi_i)^2(n+1)/n^2}.
$$
\end{itemize}
\end{lemma}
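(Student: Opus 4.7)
My plan is to realize each $\zeta_i$ as a Brownian motion evaluated at a carefully chosen stopping time, via a Skorokhod-type embedding of the symmetric distribution $(1+1/n)\xi_i$. I enlarge $(\Omega,\calF,P)$ to carry $n$ mutually independent standard Brownian motions $W^{(1)},\ldots,W^{(n)}$, each independent of $\xxi$ and starting from $0$, and set $\zeta_i := W^{(i)}_{\tau_i}$ for a stopping time $\tau_i$ adapted to the filtration of $W^{(i)}$ (and allowed to depend on $\xi_i$).

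Following the classical Skorokhod embedding for symmetric laws (cf.\ Revuz--Yor, Proposition II.3.8), a natural candidate for $\tau_i$ is the first exit time of the shifted Brownian motion $\xi_i+W^{(i)}_t$ from the symmetric interval $[-(1+1/n)|\xi_i|,\,(1+1/n)|\xi_i|]$. Conditioning on $\xi_i$, the gambler's-ruin formula gives a two-point exit distribution for $\xi_i + W^{(i)}_{\tau_i}$; averaging against the symmetric law of $\xi_i$ recovers the law of $(1+1/n)\xi_i$, and independence of the pairs $(\xi_i,W^{(i)})$ across $i$ promotes this to the joint statement, giving (a). Property (b) then follows from the optional sampling theorem applied to the martingale $W^{(i)}$ at the almost surely finite, integrable stopping time $\tau_i$ (an exit time from a bounded interval), combined with the independence of $W^{(i)}$ from $(\xi_j)_{j\neq i}$, which reduces the conditioning from $\xxi$ to $\xi_i$.

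The main technical point is (c). Applying optional sampling to the positive exponential martingale $M_t = \exp(\lambda W^{(i)}_t - \lambda^2 t/2)$ at $\tau_i$ yields $E(M_{\tau_i}\mid \xi_i)=1$, which rearranges to
\[
E(e^{\lambda\zeta_i}\mid\xi_i) \;=\; E\!\left(e^{\lambda^2\tau_i/2}\, M_{\tau_i}\mid\xi_i\right).
\]
The plan is to deduce (c) by establishing the almost-sure bound $\tau_i \le 2\xi_i^2(n+1)/n^2$: this gives $e^{\lambda^2\tau_i/2}\le e^{\lambda^2\xi_i^2(n+1)/n^2}$ and, combined with the martingale identity, yields exactly the bound in (c).

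The main obstacle will be achieving this pointwise control on $\tau_i$ compatibly with (a) and (b). The naive exit-interval construction above has a stopping time that is integrable in $\xi_i^2$ but not a.s.\ bounded by $2\xi_i^2(n+1)/n^2$, so the construction needs to be refined, for instance by truncating $\tau_i$ at the deterministic level $2\xi_i^2(n+1)/n^2$ and compensating the resulting distributional mismatch through additional independent randomization, so that the sharp constant $(n+1)/n^2$ in (c) is reached simultaneously with the distributional requirement (a) and the martingale property (b).
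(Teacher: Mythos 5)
Your construction via the Brownian exit time from $[-(1+1/n)|\xi_i|,\,(1+1/n)|\xi_i|]$ produces, after the gambler's-ruin computation, exactly the same two-point conditional law for $\zeta_i$ given $\xi_i$ that the paper writes down directly: $\zeta_i=\xi_i/n$ with probability $(2n{+}1)/(2n{+}2)$ and $\zeta_i=-2\xi_i-\xi_i/n$ with probability $1/(2n{+}2)$. So for (a) and (b) your route is a detour but not a wrong turn — once you have this two-point distribution, (a) follows from symmetry of $\xi_i$ and (b) is a direct computation; the optional-stopping machinery is unnecessary overhead.

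The plan for (c), however, has a genuine gap that you yourself flag but do not close. The a.s.\ bound $\tau_i\le 2\xi_i^2(n+1)/n^2$ is false: the exit time of Brownian motion from a bounded interval is a.s.\ finite with exponentially decaying tails, but it is not bounded. (Its conditional mean is $(2n{+}1)\xi_i^2/n^2$, below your target constant, but the support is all of $[0,\infty)$.) Your martingale identity $E(e^{\lambda\zeta_i}\mid\xi_i)=E(e^{\lambda^2\tau_i/2}M_{\tau_i}\mid\xi_i)$ therefore cannot be closed by pulling a pointwise bound on $\tau_i$ out of the expectation, and the proposed remedy — truncating $\tau_i$ and compensating with auxiliary randomization — would destroy the exact exit distribution needed for (a), with no argument that the accounting works out. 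The workable approach is to abandon the martingale route for (c) entirely: since $\zeta_i\mid\xi_i$ is an explicit two-point law, you can write $E(e^{\lambda\zeta_i}\mid\xi_i)=e^{\lambda\xi_i/n}\bigl(1+\tfrac1{2n+2}(e^{-2\lambda\xi_i(1+1/n)}-1)\bigr)$ and prove the scalar inequality $x+\log\bigl(1+\tfrac1{\alpha_0}(e^{-x\alpha_0}-1)\bigr)\le x^2\alpha_0/2$ (with $x=\lambda\xi_i/n$, $\alpha_0=2n{+}2$) by elementary calculus; this is precisely what the paper does in an auxiliary appendix lemma. Your proposal as written does not establish (c).
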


\begin{proof} Define $\zeta_i$ as a random variable such that, given
$\xi_i$, it takes values $\xi_i/n$ or $-2\xi_i-\xi_i/n$ with
conditional probabilities $P(\zeta_i=\xi_i/n|\xi_i)=(2n+1)/(2n+2)$
and $P(\zeta_i=-2\xi_i-\xi_i/n|\xi_i)=1/(2n+2)$. Then properties (a)
and (b) are~straightforward. Property (c) follows from the relation
$$
E(e^{\lambda\zeta_i}|\xi_i)=e^{\frac{\lambda \xi_i}n}\bigg(1+
\frac{1}{2n+2}\big(e^{-2\lambda \xi_i(1+1/n)}-1\big)\bigg)
$$
and Lemma~\ref{lem2} in the Appendix with $x=\lambda\xi_i/n$ and
$\alpha_0=2n+2$.
\end{proof}

We now state the main result of this section.

\begin{theorem}\label{thm3} Fix some $\alpha>0$ and assume that
$\sup_{\lambda\in\Lambda}\|f-f_\lambda\|_n\le L$ for a finite
constant $L$. If the errors $\xi_i$ are symmetric and have a finite
second moment $E(\xi_i^2)$, then for any $\beta\geq
4(1+1/n)\alpha+2L^2$ we have
\begin{equation}\label{oracle1}
E\Big(\|\hat f_n-f\|_n^2\Big)\le \int_\Lambda \|f_\lambda-f\|_n^2\,
p(d\lambda)+\frac{\beta\, \mathcal K(p,\pi)}{n+1}+R_n, \quad \forall
\ p\in \scrP_\Lambda,
\end{equation}
where the residual term $R_n$ is given by
$$
R_n=E^*\bigg(\sup_{\lambda\in\Lambda} \sum_{i=1}^n
\frac{4(n+1)(\xi^2_i-\alpha)(f_\lambda(x_i)-
\f_{\theta\cdot\pi}(x_i))^2}{n^2\beta} \bigg)
$$
and $E^*$ denotes the expectation with respect to the outer
probability.
\end{theorem}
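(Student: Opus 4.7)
The plan is to mimic the proof of Theorem~\ref{thm1}, using the three properties supplied by Lemma~\ref{l1} to replace the three uses of the independence of $\zzeta$ and $\xxi$ that appeared there. In particular, I would start from the same algebraic decomposition
\[
E\bigl(\|\hat f_n-f\|_n^2\bigr)=S+S_1,
\]
with $S$ and $S_1$ defined exactly as in the proof of Theorem~\ref{thm1}. This identity only requires $E(\zzeta^\T\bH_{\theta\cdot\pi})=0$, which here follows from the $\xxi$-measurability of $\bH_{\theta\cdot\pi}$ combined with property~(b) of Lemma~\ref{l1}.

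The bound on $S$ is obtained exactly as in Theorem~\ref{thm1}. After the same manipulations isolating $\rho(\lambda)=(\|f-f_\lambda\|_n^2-2n^{-1}\xxi^\T\bh_\lambda)/\beta$, property~(a) of Lemma~\ref{l1} allows the substitution of $(1+1/n)\xxi$ for $\xxi+\zzeta$ in distribution, yielding $S=\beta E\log\int e^{-n\rho(\lambda)}\pi(d\lambda)-\beta E\log\int e^{-(n+1)\rho(\lambda)}\pi(d\lambda)$. H\"older's inequality followed by the usual convex-duality step then gives $S\le\int\|f_\lambda-f\|_n^2\,p(d\lambda)+\beta\,\mathcal K(p,\pi)/(n+1)$.

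For $S_1$, apply Jensen's inequality conditionally on $\xxi$ to move the logarithm past $E(\,\cdot\,|\xxi)$. Since the $\zeta_i$'s constructed in the proof of Lemma~\ref{l1} are independent given $\xxi$, property~(c) of that lemma applied component-wise yields
\[
E\bigl(e^{2\zzeta^\T(\bh_\lambda-\bH_{\theta\cdot\pi})/\beta}\,\big|\,\xxi\bigr)\le\exp\Bigl\{\tfrac{4(n+1)}{n^2\beta^2}\textstyle\sum_{i=1}^n\xi_i^2\bigl(f_\lambda(x_i)-\f_{\theta\cdot\pi}(x_i)\bigr)^2\Bigr\}.
\]
Writing $\xi_i^2=\alpha+(\xi_i^2-\alpha)$ factors the right-hand side as $\tilde\Psi_\beta(\delta_\lambda,\theta\cdot\pi)\cdot e^{V_\lambda/\beta}$, where $V_\lambda$ is the summand inside the $\sup$ in $R_n$ and
\[
\tilde\Psi_\beta(\mu,\mu'):=\exp\Bigl\{\tfrac{\|f-\f_{\mu'}\|_n^2-\|f-\f_\mu\|_n^2}{\beta}+\tfrac{4(n+1)\alpha}{n\beta^2}\|\f_\mu-\f_{\mu'}\|_n^2\Bigr\}
\]
plays the role of the functional $\Psi_\beta$ from assumption~(C). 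By construction $\tilde\Psi_\beta(\theta\cdot\pi,\theta\cdot\pi)=1$.

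The main obstacle is to establish that $\mu\mapsto\tilde\Psi_\beta(\mu,\theta\cdot\pi)$ is concave. The first piece of the threshold, $\beta\ge 4(1+1/n)\alpha$, makes the quadratic form in the exponent of $\tilde\Psi_\beta$ negative semidefinite when viewed as a function of $\f_\mu\in\RR^n$; the additional $2L^2$ in the threshold, together with the bound $\|f-\f_\mu\|_n\le L$ implied by the hypothesis on $\{f_\lambda\}$, then lets Lemma~\ref{lem3} in the Appendix lift concavity of the exponent to concavity of the exponential itself — this is exactly the mechanism used to verify assumption~(C) in the proof of Proposition~\ref{prop2}. Once this concavity is in hand, Jensen's inequality yields $\int\theta_\lambda\tilde\Psi_\beta(\delta_\lambda,\theta\cdot\pi)\pi(d\lambda)\le\tilde\Psi_\beta(\theta\cdot\pi,\theta\cdot\pi)=1$. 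Factoring out $e^{\sup_\lambda V_\lambda/\beta}$ from the integral therefore gives $S_1\le E^*\sup_\lambda V_\lambda=R_n$, and combining this with the bound on $S$ produces~(\ref{oracle1}).
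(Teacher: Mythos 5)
Your proposal reproduces the paper's own proof essentially step for step: the same decomposition $S+S_1$, the same use of Lemma~\ref{l1}(a) and (b) to carry the bound on $S$ from Theorem~\ref{thm1} over verbatim, the same conditional-Jensen then property~(c) bound on $S_1$, the same split $\xi_i^2=\alpha+(\xi_i^2-\alpha)$ yielding $S_1\le S_2+R_n$, and the same application of Lemma~\ref{lem3} with $s^2=4\alpha(n+1)$ plus Jensen to get $S_2\le 0$. The argument is correct and matches the paper's.
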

\begin{proof}
We slightly modify the proof of Theorem~\ref{thm1}. We now consider
a dummy random vector $\zzeta=(\zeta_1,\ldots,\zeta_n)$ as in
Lemma~\ref{l1}. Note that for this $\zzeta$ relation (\ref{1new1})
remains valid: in fact, it suffices to condition on $\xxi$, to use
Lemma~\ref{l1}(b) and the fact that $\theta_\lambda$ is measurable
with respect to $\xxi$. Therefore, with the notation of the proof
of Theorem~\ref{thm1}, we have $E(\|\hat f_n-f\|^2_n)=S+S_1$. Using
Lemma~\ref{l1}(a) and acting exactly as in the proof of
Theorem~\ref{thm1} we get that $S$ is bounded as in (\ref{Seq}).
Finally, as shown in the proof of Theorem~\ref{thm1} the term $S_1$
satisfies
$$
S_1\le\beta E\log\int_\Lambda \theta_\lambda E_{\xxi}\exp\Big\{
\frac{\|\f_{\theta\cdot\pi}-f\|_n^2-\|f_{\lambda}-f\|_n^2+
2\zzeta^\T(\bh_{\lambda}-\bH_{\theta\cdot\pi})}{\beta}\Big\}\pi(d\lambda).
$$
According to Lemma~\ref{l1}(c),
$$
E_{\xxi} \Big(e^{2\zzeta^T
(\bh_{\lambda}-\bH_{\theta\cdot\pi})/\beta}\Big)\le
\exp\bigg\{\sum_{i=1}^n
\frac{4(n+1)(f_\lambda(x_i)-\f_{\theta\cdot\pi}(x_i))^2\xi_i^2}{n^2\beta^2}\bigg\}.
$$
Therefore, $S_1 \le S_2 + R_n$, where
$$
S_2=\beta
E\log\!\int_\Lambda\!\theta_\lambda\!\exp\Big(\frac{4\alpha(n+1)\|f_{\lambda}
-\f_{\theta\cdot\pi}\|_n^2}{n\beta^2}-
\frac{\|f-f_\lambda\|_n^2-\|f-\f_{\theta\cdot\pi}\|_n^2}{\beta}
\Big)\pi(d\lambda).
$$
Finally, we apply Lemma~\ref{lem3} (cf.\ Appendix) with $s^2=4\alpha(n+1)$ and
Jensen's inequality to get that $S_2\le0$.
\end{proof}

In view of Theorem \ref{thm3}, to get the bound (\ref{oracle}) it
suffices to show that the remainder term $R_n$ is non-positive under
some assumptions on the errors $\xi_i$. More generally, we may
derive somewhat less accurate inequalities than (\ref{oracle}) by
proving that $R_n$ is small enough. This is illustrated by the
following corollaries.

\begin{corollary}\label{cor0}
Let the assumptions of Theorem \ref{thm3} be satisfied and let
$|\xi_i|\le B$ almost surely where $B$ is a finite constant. Then
the aggregate $\hat f_n$ satisfies inequality (\ref{oracle}) for any
$\beta\geq 4B^2(1+1/n)+2L^2$.
\end{corollary}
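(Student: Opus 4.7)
The plan is to invoke Theorem~\ref{thm3} with the specific choice $\alpha=B^2$ and show that this choice forces the remainder term $R_n$ to be non-positive, at which point the corollary drops out immediately.

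First I observe that under the hypothesis $|\xi_i|\le B$ almost surely, we have $\xi_i^2\le B^2$ almost surely, so with $\alpha=B^2$ the random variables $\xi_i^2-\alpha$ are non-positive. Since the squared discrepancies $(f_\lambda(x_i)-\f_{\theta\cdot\pi}(x_i))^2$ are obviously non-negative and the factor $4(n+1)/(n^2\beta)$ is positive, every summand inside the supremum defining $R_n$ is almost surely non-positive, pointwise in $\lambda$. Taking the supremum over $\lambda\in\Lambda$ of a sum of non-positive terms yields a non-positive quantity, hence $R_n\le 0$.

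Second, with this choice of $\alpha$, the lower bound on $\beta$ appearing in Theorem~\ref{thm3}, namely $\beta\ge 4(1+1/n)\alpha+2L^2$, becomes exactly $\beta\ge 4B^2(1+1/n)+2L^2$, which is the hypothesis of the corollary. Therefore the theorem applies and yields
$$
E\Big(\|\hat f_n-f\|_n^2\Big)\le \int_\Lambda \|f_\lambda-f\|_n^2\, p(d\lambda)+\frac{\beta\, \mathcal K(p,\pi)}{n+1}+R_n
$$
for every $p\in\scrP_\Lambda$. Discarding the non-positive term $R_n$ gives (\ref{oracle}).

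There is essentially no obstacle here: the corollary is a direct specialization of Theorem~\ref{thm3} exploiting the fact that a uniform almost-sure bound on $|\xi_i|$ lets us calibrate $\alpha$ so as to make the residual term vanish in sign. The only thing one might want to double-check is measurability of the supremum defining $R_n$, but this is irrelevant since we only need the bound $R_n\le 0$, which holds pathwise and therefore also under the outer expectation $E^*$.
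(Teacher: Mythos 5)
Your proof is correct and is essentially the paper's own argument, just spelled out in more detail: the paper's proof is the single line that $\alpha=B^2$ forces $R_n\le 0$, and you have fleshed out exactly why. The calibration of $\alpha$, the sign argument, and the matching of the $\beta$ threshold are all as intended.
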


\begin{proof}
It suffices to note that for $\alpha=B^2$ we get $R_n\le 0$.
\end{proof}

\begin{corollary}\label{cor1} Let the assumptions of
Theorem \ref{thm3} be satisfied and suppose that
$E(e^{t|\xi_i|^\kappa})\le B$ for some constants $t>0$, $\kappa>0$,
$B>0$. Then for any $n\geq e^{1/\kappa}$ and any $\beta\geq
4(1+1/n)(2(\log n)/t)^{2/\kappa}+2L^2$ we have
\begin{eqnarray}\label{oracle3}
E\Big(\|\hat f_n-f\|_n^2\Big)&\le& \int_\Lambda
\|f_\lambda-f\|_n^2\,
p(d\lambda)+ \frac{\beta\, \mathcal K(p,\pi)}{n+1}\\
&&+\ \ \frac{16BL^2 (n+1)(2\log n)^{2/\kappa}}{n^2\beta\, t^{2/\kappa}}\,
, \quad \forall \ p\in \scrP_\Lambda.\nonumber
\end{eqnarray}
In particular, if $\Lambda=\{1,\dots,M\}$ and $\pi$ is the uniform
measure on $\Lambda$ we get
\begin{eqnarray}\label{oracle4}
E\Big(\|\hat f_n-f\|_n^2\Big)&\le& \min_{j=1,\dots,M} \|f_j-f\|_n^2
+ \frac{\beta\, \log M}{n+1}\\
&&+\ \ \frac{16BL^2 (n+1)(2\log n)^{2/\kappa}}{n^2\beta\,
t^{2/\kappa}}\,.\nonumber
\end{eqnarray}
\end{corollary}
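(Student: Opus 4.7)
The plan is to invoke Theorem~\ref{thm3} with the explicit choice $\alpha = (2(\log n)/t)^{2/\kappa}$, for which the hypothesis $\beta \ge 4(1+1/n)\alpha + 2L^2$ is precisely the lower bound on $\beta$ assumed in the corollary. Once Theorem~\ref{thm3} is applied, inequality (\ref{oracle1}) holds and only the residual $R_n$ needs to be controlled.

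To bound $R_n$, I would first dominate the integrand inside the supremum by its positive part and factor out a maximum:
\[
\sup_{\lambda \in \Lambda} \sum_{i=1}^n (\xi_i^2 - \alpha)(f_\lambda(x_i) - \f_{\theta\cdot\pi}(x_i))^2
\le \Bigl(\max_{1 \le i \le n}(\xi_i^2 - \alpha)_+\Bigr)\sup_{\lambda\in\Lambda}\sum_{i=1}^n (f_\lambda(x_i) - \f_{\theta\cdot\pi}(x_i))^2,
\]
where the inner inequality uses the pointwise non-negativity of the squared differences. Applying Jensen's inequality to the convex functional $\mu\mapsto\|f-\f_\mu\|_n^2$ gives $\|f-\f_{\theta\cdot\pi}\|_n\le L$, so by the triangle inequality $\|f_\lambda-\f_{\theta\cdot\pi}\|_n^2\le 4L^2$ uniformly in $\lambda$. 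This deterministic step reduces the task to
\[
R_n \le \frac{16(n+1)L^2}{n\beta}\,E\Bigl[\max_{1\le i\le n}(\xi_i^2-\alpha)_+\Bigr].
\]

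Next I would estimate the expectation by a union bound and Markov's inequality: $E[\max_i(\xi_i^2-\alpha)_+]\le n\,E[(\xi_1^2-\alpha)_+]=n\int_\alpha^\infty P(\xi_1^2>u)\,du$, and the assumption $E[e^{t|\xi_1|^\kappa}]\le B$ yields $P(\xi_1^2>u)\le Be^{-tu^{\kappa/2}}$. The substitution $v=tu^{\kappa/2}$ converts the resulting integral into $\frac{2B}{\kappa t^{2/\kappa}}\int_{2\log n}^\infty e^{-v}v^{2/\kappa-1}\,dv$, and the hypothesis $n\ge e^{1/\kappa}$ ensures $2\log n\ge 2/\kappa$, which permits bounding the upper-incomplete-gamma tail by $(2\log n)^{2/\kappa}/n^2$. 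Plugging this back gives $E[(\xi_1^2-\alpha)_+]\le B\alpha/n^2$; assembling everything produces the residual term $16BL^2(n+1)\alpha/(n^2\beta)$ appearing in (\ref{oracle3}).

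Finally, (\ref{oracle4}) follows by specialising (\ref{oracle3}) to the Dirac measure $p=\delta_{j^*}$ with $j^*$ minimising $\|f_j-f\|_n^2$ over $\{1,\dots,M\}$, since the uniform prior yields $\KL(\delta_{j^*},\pi)=\log M$. The main technical obstacle is the incomplete-gamma tail estimate: for $\kappa\ge 2$ the exponent $2/\kappa-1$ is non-positive and the bound is immediate by monotonicity of the integrand, while for $\kappa<2$ one must either integrate by parts once or split the domain into $[\alpha,2\alpha]\cup[2\alpha,\infty)$ and use $n\ge e^{1/\kappa}$ to absorb the lower-order terms; once this is done, the remainder of the argument is routine algebra.
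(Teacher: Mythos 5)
Your overall architecture matches the paper: set $\alpha=(2(\log n)/t)^{2/\kappa}$ so that the hypothesis of Theorem~\ref{thm3} is exactly the assumed lower bound on $\beta$, bound the integrand of $R_n$ deterministically by $4L^2$ uniformly in $\lambda$ (via Jensen and the triangle inequality for $\|\cdot\|_n$), reduce to a moment estimate on a single error, and then take Dirac $p$ for the second display. The first part of your argument is essentially identical to the paper's, which gets $R_n\le \frac{16L^2(n+1)}{\beta}E(\xi_1^2-\alpha)_+$ directly from \eqref{oracle4_1}; your detour through $\max_i(\xi_i^2-\alpha)_+$ and a union bound lands in the same place.

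The genuine gap is in the tail estimate $E(\xi_1^2-\alpha)_+\le B\alpha/n^2$. Your route (write $E(\xi_1^2-\alpha)_+=\int_\alpha^\infty P(\xi_1^2>u)\,du$, Markov to get $P(\xi_1^2>u)\le Be^{-tu^{\kappa/2}}$, then an incomplete-gamma tail estimate) is \emph{lossy} for $\kappa<2$ and does not yield the claimed bound under the stated hypothesis $n\ge e^{1/\kappa}$. Concretely, take $\kappa=1$, $t=1$, $n=e$: then $\alpha=4$ and
\[
\int_4^\infty e^{-\sqrt u}\,du = 2\int_2^\infty v e^{-v}\,dv = 6e^{-2} > 4e^{-2} = \frac{\alpha}{n^2},
\]
so $B\int_\alpha^\infty e^{-tu^{\kappa/2}}\,du > B\alpha/n^2$ and the constant $16$ in \eqref{oracle3} is not recovered. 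Neither integration by parts nor splitting the domain repairs this: the inequality $\int_\alpha^\infty e^{-tu^{\kappa/2}}\,du\le \alpha e^{-t\alpha^{\kappa/2}}$ is simply false for $\alpha$ near its allowed lower limit when $\kappa<2$. The paper avoids the integral representation altogether: it bounds $E(\xi_1^2-\alpha)_+\le E[\xi_1^2\1(\xi_1^2>\alpha)]$, multiplies and divides by $e^{t|\xi_1|^\kappa}$ inside the expectation, and uses that $x\mapsto x^2 e^{-tx^\kappa}$ is decreasing for $x\ge (2/(t\kappa))^{1/\kappa}$, which holds throughout $\{\xi_1^2>\alpha\}$ precisely because $n\ge e^{1/\kappa}$. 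This gives the pointwise bound $\xi_1^2\1(\xi_1^2>\alpha)\le (\alpha/n^2)\,e^{t|\xi_1|^\kappa}$ and hence $E(\xi_1^2-\alpha)_+\le B\alpha/n^2$ for every $\kappa>0$. You should replace your Markov-plus-gamma-tail step with this monotonicity argument; without that, your proof establishes the corollary only for $\kappa\ge 2$ or with a degraded constant.
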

\begin{proof}
Set $\alpha=(2(\log n)/t)^{2/\kappa}$ and note that
\begin{eqnarray}\label{oracle4_1}
R_n&\le& \frac{4(n+1)}{n\beta}\sup_{\lambda\in\Lambda,
\mu\in\scrP_\Lambda'} \|f_\lambda-\f_\mu\|_n^2\ \sum_{i=1}^n
E(\xi_i^2-\alpha)_+
\\
&\le& \frac{16L^2(n+1)}{\beta} E(\xi_1^2-\alpha)_+ \nonumber
\end{eqnarray}
where $a_+=\max(0,a)$. For any $x\geq (2/(t\kappa))^{1/\kappa}$ the
function $x\mapsto x^2e^{-tx^\kappa}$ is decreasing. Therefore, for
any $n\geq e^{1/\kappa}$ we have $x^2e^{-tx^\kappa}\leq \alpha
e^{-t\alpha^{\kappa/2}}=\alpha/n^2$, as soon as $x^2\ge \alpha$.
Hence, $E(\xi_1^2-\alpha)_+\le B\alpha/n^2$ and the desired
inequality follows.
\end{proof}
\begin{corollary}\label{cor1_2} Assume that $\sup_{\lambda\in\Lambda}\|f-f_\lambda\|_\infty\le L$ and the errors $\xi_i$ are symmetric with $E(|\xi_i|^s)\le B$
for some constants $s\ge 2$, $B>0$. Then for any $\alpha_0>0$ and
any $\beta\geq 4(1+1/n)\alpha_0 n^{2/(s+2)}+2L^2$ we have
\begin{eqnarray}\label{oracle3_2}
E\Big(\|\hat f_n-f\|_n^2\Big)&\le& \int_\Lambda
\|f_\lambda-f\|_n^2\, p(d\lambda)+ \frac{\beta\, \mathcal
K(p,\pi)}{n+1} + {\bar C} n^{- s/(s+2)} \, , \quad \forall \
p\in \scrP_\Lambda.\nonumber
\end{eqnarray}
where ${\bar C}>0$ is a constant that depends only on $s,L,B$ and
$\alpha_0$.
\end{corollary}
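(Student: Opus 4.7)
The plan is to apply Theorem \ref{thm3} with the choice $\alpha = \alpha_0 n^{2/(s+2)}$ and then show that under the moment assumption $E(|\xi_i|^s)\le B$ the residual term $R_n$ is of order $n^{-s/(s+2)}$. First note that $\|\cdot\|_n \le \|\cdot\|_\infty$, so the hypothesis $\sup_{\lambda\in\Lambda}\|f-f_\lambda\|_\infty\le L$ implies $\sup_{\lambda\in\Lambda}\|f-f_\lambda\|_n\le L$, which is the condition required by Theorem~\ref{thm3}. The choice of $\alpha$ is compatible with the constraint $\beta\ge 4(1+1/n)\alpha+2L^2$ thanks to the assumption made on $\beta$. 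Thus Theorem~\ref{thm3} yields the desired oracle inequality modulo the control of $R_n$.

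The main step is the bound on $R_n$. Since $\bar f_{\theta\cdot\pi}$ is a convex combination of the $f_\lambda$'s, the assumption $\sup_{\lambda}\|f-f_\lambda\|_\infty\le L$ gives $\|f-\bar f_{\theta\cdot\pi}\|_\infty\le L$, and by the triangle inequality $(f_\lambda(x_i)-\bar f_{\theta\cdot\pi}(x_i))^2\le 4L^2$ uniformly in $\lambda$ and $i$. Since these squared differences are nonnegative, the supremum inside $R_n$ is bounded by keeping only the positive parts of $\xi_i^2-\alpha$:
$$
\sup_{\lambda\in\Lambda}\sum_{i=1}^n (\xi_i^2-\alpha)(f_\lambda(x_i)-\bar f_{\theta\cdot\pi}(x_i))^2\le 4L^2\sum_{i=1}^n (\xi_i^2-\alpha)_+.
$$
Hence
$$
R_n\le \frac{16L^2(n+1)}{n\beta}\, E(\xi_1^2-\alpha)_+.
$$

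The last step is a Markov-type estimate on the truncated second moment. On the event $\{|\xi_1|>\sqrt{\alpha}\}$ we have $\xi_1^2\le |\xi_1|^s\alpha^{(2-s)/2}$, so
$$
E(\xi_1^2-\alpha)_+\le E\bigl[\xi_1^2\,\mathbf{1}(|\xi_1|>\sqrt{\alpha})\bigr]\le B\,\alpha^{(2-s)/2}.
$$
Plugging in $\alpha=\alpha_0 n^{2/(s+2)}$ and using that $\beta\ge 4(1+1/n)\alpha_0 n^{2/(s+2)}$, the factors of $n$ combine as $n^{1-2/(s+2)+(2-s)/(s+2)}=n^{-s/(s+2)}$, giving $R_n\le \bar C n^{-s/(s+2)}$ with $\bar C$ depending only on $s,L,B,\alpha_0$.

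The only mildly delicate point is the handling of the outer supremum over $\lambda$ in $R_n$: it is resolved simply because the weights $(f_\lambda(x_i)-\bar f_{\theta\cdot\pi}(x_i))^2$ are deterministically bounded by $4L^2$ (this is where the $\|\cdot\|_\infty$ assumption, rather than just $\|\cdot\|_n$, is used), which in turn reduces the supremum to a sum of positive parts that no longer depends on $\lambda$ and can be handled by Fubini and a moment bound.
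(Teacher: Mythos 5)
Your proof is correct and follows essentially the same route as the paper's: apply Theorem~\ref{thm3} with $\alpha=\alpha_0 n^{2/(s+2)}$, bound $(f_\lambda(x_i)-\f_{\theta\cdot\pi}(x_i))^2$ by $4L^2$ uniformly using the $\|\cdot\|_\infty$ hypothesis, reduce the supremum over $\lambda$ to $\sum_i (\xi_i^2-\alpha)_+$, and control the truncated moment by Markov's inequality to get $E(\xi_1^2-\alpha)_+\le B\alpha^{(2-s)/2}$. One small slip: in the final bookkeeping you write the exponent as $1-\frac{2}{s+2}+\frac{2-s}{s+2}$, which actually equals $\frac{2}{s+2}$, not $-\frac{s}{s+2}$; you dropped the $-1$ contributed by the $n$ in the denominator of $\frac{n+1}{n\beta}$. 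The cleaner accounting (and the paper's) is to use $\beta\ge 4(1+1/n)\alpha$ to get $\frac{n+1}{n\beta}\le\frac{1}{4\alpha}$, whence $R_n\le\frac{4L^2}{\alpha}E(\xi_1^2-\alpha)_+\le 4L^2B\,\alpha^{-s/2}=4L^2B\alpha_0^{-s/2}n^{-s/(s+2)}$; the conclusion is unchanged.
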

\begin{proof} Set $\alpha = \alpha_0 n^{2/(s+2)}$. In view of the inequality
$(f_\lambda(x_i)-\bar f_{\theta\cdot \pi}(x_i))^2\le 4\sup_{\lambda\in\Lambda}\|f-f_\lambda\|_\infty^2$,
the remainder term of Theorem~\ref{thm3} can be bounded as follows:
$$
R_n\le \frac{16L^2(n+1)}{n^2\beta}\sum_{i=1}^n E(\xi_i^2-\alpha)_+\le \frac{4L^2}{\alpha}\ E(\xi_1^2-\alpha)_+.
$$
To complete the proof, it suffices to notice that
$E(\xi_1^2-\alpha)_+=E(\xi_1^2 \1(\xi_1^2>\alpha)) \le
E(|\xi_1|^s)/\alpha^{s/2-1}$ by the Markov inequality.
\end{proof}

Corollary~\ref{cor1} shows that if the tails of the distribution of
errors have exponential decay and if $\beta$ is of the order $(\log
n)^{2/\kappa}$, then the rate of convergence in the bound
(\ref{oracle4}) is of the order $(\log n)^{\frac2\kappa}(\log M)/n$.
The residual $R_n$ in Corollary~\ref{cor1} is of a smaller order
than this rate and can be made even further smaller by taking
$\alpha=(u(\log n)/t)^{2/\kappa}$ with $u> 2$. For $\kappa=1$,
comparing Corollary~\ref{cor1} with the risk bounds obtained in
\cite{cat99,jrt05} for an averaged algorithm in i.i.d. random design
regression, we see that an extra $\log n$ multiplier appears. It is
noteworthy that this deterioration of the convergence rate does not
occur if only the existence of finite (power) moments is
assumed. In this case, the result of Corollary~\ref{cor1_2} provides
the same rates of convergence as those obtained under the analogous
moment conditions for model selection type aggregation in the i.i.d.
case, cf.\  \cite{jrt05,a2}.

\section{Sparsity oracle inequalities with no assumption on the dictionary}
\label{sec7}

In this section we assume that $f_\lambda$ is a
linear combination of $M$ known functions $\phi_1,\ldots,\phi_M$,
where $\phi_j:\cal X\to \RR$, with the vector of weights
$\lambda=(\lambda_1,\dots,\lambda_M)$ that belongs to a subset
$\Lambda$ of $\RR^M $:
$$f_\lambda=\sum_{j=1}^M \lambda_j\phi_j.$$
The set of functions $\{\phi_1,\ldots,\phi_M\}$ is called the
dictionary.

Our aim is to obtain sparsity oracle inequalities (SOI) for the
aggregate with exponential weights $\hat f_n$. The SOI are oracle
inequalities bounding the risk in terms of the number
$M(\lambda)$ of non-zero components (sparsity index) of $\lambda$
or similar characteristics. As discussed in Introduction, the
SOI is a powerful tool allowing one to solve simultaneously
several problems: sparse recovery in high-dimensional regression
models, adaptive nonparametric regression estimation, linear,
convex and model selection type aggregation.

For $\lambda\in\RR^M$ denote by $J(\lambda)$ the set~of indices $j$
such that $\lambda_j\ne0$, and set $M(\lambda)\triangleq {\rm
Card}(J(\lambda))$. For any $\tau>0$, $0<L_0\le\infty$, define the
probability densities
\begin{align}\label{prior1}
q_0(t)&=\frac3{2(1+|t|)^4},\ \forall t\in\RR,\\
q(\lambda)&=\frac1{C_0} \prod_{j=1}^M \tau^{-1}\;
q_0\big(\lambda_j/\tau\big)\1(\|\lambda\|\le L_0),\
\forall\lambda\in\RR^M,\label{prior2}
\end{align}
where $C_0=C_0(\tau,M,L_0)$ is a normalizing constant such that $q$
integrates to 1, and $\|\lambda\|$ stands for the Euclidean norm of
$\lambda\in\RR^M$.

In this section we choose the prior $\pi$ in the definition of
$f_\lambda$ as a distribution on~$\RR^M$ with the Lebesgue density
$q$:  $\pi(d\lambda)=q(\lambda)d\lambda.$ We will call it the {\it
sparsity prior}.

 Let us now discuss this choice of the prior. Assume for simplicity that
 $L_0=\infty$ which implies $C_0=1$.
 Then the aggregate $\hat f_n$ based on the sparsity prior
 can be written in the form
$\hat f_{n}=f_{\hat\lambda}$, where $\hat\lambda=(\hat
\lambda_1,\dots,\hat \lambda_M)$ is the posterior mean in the
``phantom" parametric model (\ref{phm}):
$$
\hat \lambda_j = \int_{\RR^M} \lambda_j \theta_n(\lambda) d\lambda,
\quad j=1,\dots, M,
$$
with the posterior density
\begin{eqnarray}\label{phm1}
\theta_n(\lambda) &=& C \exp\big\{-n\|{\bf Y}-{
f}_\lambda\|_n^2/\beta + \log q(\lambda)\big\}\\
&=&C'\exp\big\{-n\|{\bf Y}-{ f}_\lambda\|_n^2/\beta -
4\sum_{j=1}^M\log (1+|\lambda_j|/\tau)\big\}\,.\nonumber
\end{eqnarray}
Here $C>0, C'>0$ are normalizing constants, such that
$\theta_n(\cdot)$ integrates to 1. To compare our estimator with
those based on the penalized least squares approach (BIC, Lasso,
bridge), we consider now the posterior mode $\tilde \lambda$ of
$\theta_n(\cdot)$ (the MAP estimator) instead of the posterior mean
$\hat \lambda$. It is easy to see that $\tilde \lambda$ is also a
penalized least squares estimator. In fact, it follows from
(\ref{phm1}) that the MAP estimator is a solution of the
minimization problem
\begin{eqnarray}\label{phm2} \tilde \lambda =
\arg \min_{\lambda\in\RR^M} \Big\{ \|{\bf Y} -{ f}_\lambda\|_n^2 +
\frac{4\beta}{n}\sum_{j=1}^M\log (1+|\lambda_j|/\tau)\Big\}.
\end{eqnarray}
Thus, the MAP ``approximation" of our estimator suggests that it can
be heuristically associated with the penalty which is logarithmic in
$\lambda_j$. In the sequel, we will choose $\tau$ very small (cf.\
Theorems 5 and 6 below). For such values of $\tau$ the function
$\lambda_j \mapsto \log (1+|\lambda_j|/\tau)$ is very steep near the
origin and can be viewed as a reasonable approximation for the BIC
penalty function $\lambda_j \mapsto \1(\lambda_j\ne 0)$. The penalty
$\log (1+|\lambda_j|/\tau)$ is not convex in $\lambda_j$, so that
the computation of the MAP estimator (\ref{phm2}) is problematic,
similarly to that of the BIC estimator. On the other hand, our
posterior mean $\hat f_{n}$ is efficiently computable. Thus, the
aggregate $\hat f_{n}$ with the sparsity prior can be viewed as a
computationally feasible approximation to the logarithmically
penalized least squares estimator or to the closely related BIC
estimator. Interestingly, the results that we obtain below for the
estimator $\hat f_n$ are valid under weaker conditions than the
analogous results for the Lasso and Dantzig selector proved in
\cite{brt07,btw07b} and are sharper than those for the BIC
\cite{btw07a} since we get oracle inequalities with leading constant
1 that are not available for the BIC.

Note that if we redefine $q_0$ as the double exponential density,
the corresponding MAP estimator is nothing but the penalized least
squares estimator with the Lasso penalty $\sim \sum_{j=1}^M
|\lambda_j|$. More generally, if $q_0(t)\sim \exp(-|t|^{\gamma})$
for some $0<\gamma<2$, the corresponding MAP solution is a bridge
regression estimator, i.e., the penalized least squares estimator
with penalty $\sim \sum_{j=1}^M |\lambda_j|^\gamma$ \cite{ff93}. The
argument that we develop below can be easily adapted for these
priors, but the resulting SOI are not as accurate as those that we
obtain in Theorems 5 and 6 for the sparsity prior (\ref{prior1}),
(\ref{prior2}). The reason is that the remainder term of the SOI is
logarithmic in $\lambda_j$ when the sparsity prior is used, whereas
it increases polynomially in $\lambda_j$ for the above mentioned
priors.

We first prove a theorem that provides a general tool to derive the
SOI from the PAC-Bayesian bound (\ref{oracle6}). Then we will use it
to get the SOI in more particular contexts. Note that in this
general theorem $\hat f_n$ is not necessarily an exponentially
weighted aggregate defined by (\ref{aggr}). It can be any $\hat f_n$
satisfying (\ref{oracle6}). The result of the theorem obviously
extends to the case where a remainder term as $R_n$ (cf.\
(\ref{oracle1})) is added to the basic PAC-Bayesian bound
(\ref{oracle6}).

\begin{theorem}\label{thm4}
Let $\hat f_n$ satisfy (\ref{oracle6}) with
$\pi(d\lambda)=q(\lambda)\,d\lambda$ and $\tau\le \delta L_0/\sqrt
M$ where $0<L_0\le \infty$, $0<\delta <1$. Assume that $\Lambda$
contains the ball $\{\lambda\in\RR^M: \|\lambda\|\le L_0\}$. Then
for all $\lambda^*$ such that $\|\lambda^*\|\le (1-\delta)L_0$ we
have
$$
E\Big(\|\hat f_n-f\|_n^2\Big)\le \|f_{\lambda^*}-f\|_n^2+
\frac{4\beta}{n}\sum_{j\in
J(\lambda^*)}\log(1+\tau^{-1}|\lambda_j^*|)+R(M,\tau,L_0,\delta),
$$
where the residual term is
$$
R(M,\tau,L_0,\delta)=\tau^2e^{2\tau^3M^{5/2}(\delta
L_0)^{-3}}\sum_{j=1}^M\|\phi_j\|_n^2+
\frac{2\beta\tau^3M^{5/2}}{n\delta^3L^3_0}
$$
for $L_0<\infty$ and $R(M,\tau,\infty,\delta)=\tau^2\sum_{j=1}^M\|\phi_j\|_n^2$.
\end{theorem}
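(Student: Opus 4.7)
The plan is to apply the PAC-Bayesian bound (\ref{oracle6}) with a posterior $p$ chosen as a \emph{symmetric} perturbation of the prior centred at $\lambda^*$. Setting $\tilde p(\lambda)=\prod_{j=1}^{M}\tau^{-1}q_0((\lambda_j-\lambda_j^*)/\tau)$, I would take
$$
p(d\lambda)=Z'^{-1}\tilde p(\lambda)\,\1(\|\lambda-\lambda^*\|\le\delta L_0)\,d\lambda,\qquad Z'=\int\tilde p(\lambda)\,\1(\|\lambda-\lambda^*\|\le\delta L_0)\,d\lambda.
$$
The hypothesis $\|\lambda^*\|\le(1-\delta)L_0$ ensures $\{\|\lambda-\lambda^*\|\le\delta L_0\}\subset\{\|\lambda\|\le L_0\}$, so $p\ll\pi$; when $L_0=\infty$ there is no truncation and $p=\tilde p$. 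A direct computation gives the one-dimensional tail $\tilde P(|\lambda_j-\lambda_j^*|>t)=(1+t/\tau)^{-3}\le(\tau/t)^3$, and since $\|\lambda-\lambda^*\|>\delta L_0$ forces $\max_j|\lambda_j-\lambda_j^*|>\delta L_0/\sqrt M$, a union bound yields
$$
1-Z'\le M\cdot\Bigl(\frac{\tau\sqrt M}{\delta L_0}\Bigr)^3=\frac{\tau^3M^{5/2}}{(\delta L_0)^3}=:\alpha,
$$
hence $Z'^{-1}\le e^{2\alpha}$ in the regime $\alpha\le 1/2$ (outside this regime the first term of $R$ already dwarfs the right-hand side).

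Next I bound the Kullback--Leibler divergence. On the support of $p$ the density ratio equals $(C_0/Z')\prod_j[(1+|\lambda_j|/\tau)/(1+|\lambda_j-\lambda_j^*|/\tau)]^4$. Using the pointwise subadditivity $1+|\lambda_j|/\tau\le(1+|\lambda_j-\lambda_j^*|/\tau)(1+|\lambda_j^*|/\tau)$, together with $C_0\le 1$ and $-\log Z'\le 2\alpha$, one obtains
$$
\KL(p,\pi)\le 2\alpha+4\sum_{j\in J(\lambda^*)}\log\bigl(1+\tau^{-1}|\lambda_j^*|\bigr),
$$
which, multiplied by $\beta/n$, produces both the sparsity-indexed main term and the $2\beta\alpha/n$ piece of $R$. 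For $\int\|f_\lambda-f\|_n^2\,p(d\lambda)$ the crucial observation is that $p$ is invariant under each coordinate reflection $\lambda_k\mapsto 2\lambda_k^*-\lambda_k$, so $\int\lambda_j\,p(d\lambda)=\lambda_j^*$ and $\int(\lambda_j-\lambda_j^*)(\lambda_k-\lambda_k^*)p(d\lambda)=0$ for $j\ne k$. Expanding
$$
\|f_\lambda-f\|_n^2=\|f_{\lambda^*}-f\|_n^2+2\langle f_\lambda-f_{\lambda^*},f_{\lambda^*}-f\rangle_n+\|f_\lambda-f_{\lambda^*}\|_n^2,
$$
the cross term vanishes, the off-diagonal quadratic contributions vanish, and the diagonal is bounded by $\int(\lambda_j-\lambda_j^*)^2p(d\lambda)\le\tau^2/Z'\le\tau^2 e^{2\alpha}$, so
$$
\int\|f_\lambda-f\|_n^2\,p(d\lambda)\le\|f_{\lambda^*}-f\|_n^2+\tau^2 e^{2\alpha}\sum_{j=1}^{M}\|\phi_j\|_n^2,
$$
which is the remaining content of $R$. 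Summing the two estimates and invoking (\ref{oracle6}) yields the theorem.

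The main obstacle is arranging the argument so that $\|f_{\lambda^*}-f\|_n^2$ appears with the sharp coefficient $1$ and is not itself inflated by the truncation factor $e^{2\alpha}$. Restricting $p$ to a ball centred at $\lambda^*$ rather than to the ambient set $\{\|\lambda\|\le L_0\}$ preserves the reflection symmetry that kills the cross term, and this symmetry requirement is exactly what forces the hypotheses $\|\lambda^*\|\le(1-\delta)L_0$ and $\tau\sqrt M\le\delta L_0$. A secondary subtlety is that $q_0$ has no finite absolute third moment, which rules out a Chebyshev-type bound via $E\|\lambda-\lambda^*\|^3$; the one-dimensional cubic-tail decay of $q_0$ combined with the union bound is what produces the precise exponent $\tau^3M^{5/2}/(\delta L_0)^3$ appearing in $R$.
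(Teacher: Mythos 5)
Your approach is essentially the paper's: take for $p$ the sparsity prior re-centred at $\lambda^*$ and truncated to $\{\|\lambda-\lambda^*\|\le\delta L_0\}$, control the Kullback--Leibler term via the pointwise bound $1+|\lambda_j|/\tau\le(1+|\lambda_j-\lambda_j^*|/\tau)(1+|\lambda_j^*|/\tau)$, and use the symmetry of $p$ about $\lambda^*$ to kill the cross term and the off-diagonal quadratic contributions. The one place you deviate from the paper is the control of the normalizing constant $Z'$, and that is where a real gap opens up.

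You lower-bound $Z'$ through the union bound $1-Z'\le M(\tau\sqrt M/(\delta L_0))^3=\alpha$ and then invoke $Z'\ge e^{-2\alpha}$ ``in the regime $\alpha\le 1/2$,'' dismissing the complementary regime with the remark that the first term of $R$ ``already dwarfs the right-hand side.'' That remark is not a proof: the left-hand side $E\|\hat f_n-f\|_n^2$ has no reason to be dominated by a term of $R$, and the hypothesis $\tau\le\delta L_0/\sqrt M$ only forces $\alpha\le M$, so the range $\alpha\in(1/2,M]$ is genuinely uncovered. The paper closes this by lower-bounding $Z'$ with the tensorized product rather than the union bound:
\[
Z'\ \ge\ \prod_{j=1}^{M}\Bigl(1-\bigl(1+\delta L_0\tau^{-1}M^{-1/2}\bigr)^{-3}\Bigr)
\ =\ (1-u)^M,\qquad u=\bigl(1+\delta L_0\tau^{-1}M^{-1/2}\bigr)^{-3}.
\]
The constraint $\tau\le\delta L_0/\sqrt M$ gives $u\le 1/8$, so the elementary inequality $1-x\ge e^{-2x}$ on $[0,1/2]$ applies to \emph{each factor}, yielding $Z'\ge e^{-2Mu}\ge e^{-2\alpha}$ for every admissible $\tau$, with no restriction on the size of $\alpha$. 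Substituting this for your union bound closes the gap and produces exactly the stated residual $R(M,\tau,L_0,\delta)$. Two smaller observations: your reflection-symmetry argument for the off-diagonal quadratic terms is correct, though the paper avoids it by first dropping the indicator $\1(\|\lambda-\lambda^*\|\le\delta L_0)\le 1$ so that the measure tensorizes; and your remark about $q_0$ lacking a finite third moment is a non-issue, since neither argument requires one---only the one-dimensional tail $(1+t/\tau)^{-3}$ is needed.
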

\begin{proof}
We apply Theorem~\ref{thm1} with
$p(d\lambda)=C_{\lambda^*}^{-1}q(\lambda-\lambda^*)\1(\|\lambda-\lambda^*\|\le \delta L_0)\,d\lambda$, where
$C_{\lambda^*}$ is the normalizing constant. Using the symmetry of
$q$ and the fact that
$f_{\lambda}-f_{\lambda^*}=f_{\lambda-\lambda^*}=-f_{\lambda^*-\lambda}$
we get
$$
\int_\Lambda \langle
f_{\lambda^*}-f,f_\lambda-f_{\lambda^*}\rangle_n\,p(d\lambda)=C_{\lambda^*}^{-1}
\int_{\|w\|\le \delta L_0}
\langle f_{\lambda^*}-f,f_w\rangle_n\,q(w)\,dw =0.$$ Therefore
$\int_\Lambda\|f_\lambda-f\|_n^2\,p(d\lambda)=
\|f_{\lambda^*}-f\|_n^2+\int_\Lambda\|f_\lambda-f_{\lambda^*}\|_n^2\,p(d\lambda)$. On the other hand,
bounding the indicator  $\1(\|\lambda-\lambda^*\|\le \delta L_0)$ by one and using the identities
$\int_\RR q_0(t)\,dt=\int_\RR t^2q_0(t)\,dt=1$, we obtain
\begin{align*}
\int_\Lambda\|f_\lambda-f_{\lambda^*}\|_n^2\,p(d\lambda)&\le
\frac1{C_0C_{\lambda^*}}\sum_{j=1}^M\|\phi_j\|_n^2\int_{\RR}\frac{w_j^2}{\tau}\;q_0\Big(
\frac{w_j}{\tau}\Big)\,dw_j
=\frac{\tau^2\sum_{j=1}^M\|\phi_j\|_n^2}{C_0C_{\lambda^*}} .
\end{align*}
Since $1-x\ge e^{-2x}$ for all $x\in[0,1/2]$, we get
\begin{align*}
C_{\lambda^*}C_0&=\frac1{\tau^M}\int_{\|\lambda\|\le \delta L_0}
\Big\{ \prod_{j=1}^M q_0\Big(\frac{\lambda_j}{\tau}\Big)\Big\}
\,d\lambda\ge\frac1{\tau^M}\prod_{j=1}^M \Big\{\int_{|\lambda_j|\le
\frac{\delta L_0}{\sqrt M}}
q_0\Big(\frac{\lambda_j}{\tau}\Big)\,d\lambda_j\Big\}\\
&=\bigg(\int_0^{\delta L_0/\tau\sqrt M} \frac{3dt}{(1+t)^4}\bigg)^M
=\bigg(1-\frac{1}{(1+\delta L_0\tau^{-1} M^{-1/2})^3}\bigg)^M\\
&\ge \exp\Big(-\frac{2M}{(1+\delta L_0\tau^{-1} M^{-1/2})^3}\Big)
\ge \exp(-2\tau^3M^{5/2}(\delta L_0)^{-3}).
\end{align*}
On the other hand, in view of the inequality $1+|\lambda_j/\tau|\le (1+|\lambda^*_j/\tau|)(1+|\lambda_j
-\lambda_j^*|/\tau)$ the Kullback-Leibler divergence between $p$ and
$\pi$ is bounded as follows:
\begin{align*}
{\cal K} (p,\pi)&=\int_{\RR^M}\log\bigg(
\frac{C_{\lambda^*}^{-1}q(\lambda-\lambda^*)}{q(\lambda)}\bigg)\,p(d\lambda)
\le
 4\sum_{j=1}^M\log(1+|\tau^{-1}\lambda_j^*|)-\log C_{\lambda^*}\,.
\end{align*}
Easy computation yields $C_0\le 1$. Therefore
$C_{\lambda^*}\ge C_0C_{\lambda^*}\ge \exp(-\frac{2\tau^3M^{5/2}}{(\delta L_0)^{3}})$ and the
desired result follows.
\end{proof}

Inspection of the proof of Theorem \ref{thm4} shows that our choice
of prior density $q_0$ in (\ref{prior1}) is not the only possible
one. Similar result can be readily obtained when $q_0(t)\sim
|t|^{-3-\delta}$, as $|t|\to\infty$, for any $\delta>0$. The
important point is that $q_0(t)$ should be symmetric, with finite
second moment, and should decrease not faster than a polynomial, as
$|t|\to\infty$.

We now explain how the result of Theorem \ref{thm4}
can be applied to improve the SOI existing in the literature. In our
setup the values $x_1,\dots, x_n$ are deterministic. For this case,
SOI for the BIC, Lasso and Dantzig selector are obtained in
\cite{btw07a,ct,zh07,brt07}. In those papers the random errors
$\xi_i$ are Gaussian. So, we will also focus on the Gaussian case,
though similar corollaries of Theorem 5 are straightforward to
obtain for other distributions of errors satisfying the assumptions
of Sections 3, 4 or 6.

Denote by $\Phi$ the Gram matrix associated
to the family $(\phi_j)_{j=1,\ldots,M}$, i.e., the $M\times M$
matrix with entries $\Phi_{j,j'}=n^{-1}\sum_{i=1}^n
\phi_j(x_i)\phi_{j'}(x_i)$, $j,j'\in\{1,\ldots,M\}$, and denote by
$\tr(\Phi)$ the trace of $\Phi$. Set $\log_+x=\max(\log x,0)$,
$\forall \ x>0$.

\begin{theorem}\label{soi}
Let $\hat f_n$ be defined by (\ref{aggr}) with
$\pi(d\lambda)=q(\lambda)\,d\lambda$ and $L_0=\infty$.  Let $\xi_i$
be i.i.d.\ Gaussian $\mathcal N(0,\sigma^2)$ random variables with
$\sigma^2>0$ and assume that $\beta\ge 4\sigma^2$, $\tr(\Phi)>0$.
Set $\tau=\frac{\sigma}{\sqrt{n\tr(\Phi)}}$. Then for all
$\lambda^*\in \RR^M$ we have
\begin{align*}
E\Big(\|\hat f_n-f\|^2_n\Big)&\le \|f_{\lambda^*}-f\|_n^2 +
\frac{4\beta M(\lambda^*)}{n}
\Big(1\!+\log_+\!\!\Big\{\frac{\sqrt{n\tr(\Phi)}}{M(\lambda^*)\sigma}
|\lambda^*|_1\Big\}\!\Big)+\frac{\sigma^2}{n}
\end{align*}
where $|\lambda^*|_1=\sum_{j=1}^M|\lambda_j^*|$.
\end{theorem}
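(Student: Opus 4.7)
The plan is to apply Theorem~\ref{thm4} with $L_0=\infty$. For this to be legitimate the aggregate $\hat f_n$ must satisfy (\ref{oracle6}); since the errors are Gaussian, example~(i) of Section~\ref{sec2} yields $g_\xi\equiv\sigma^2$, so the hypothesis $\beta\ge 4\sigma^2=4\|g_\xi\|_\infty$ of Theorem~\ref{thm6} is met and (\ref{oracle6}) indeed holds. Feeding this into Theorem~\ref{thm4} with $L_0=\infty$ gives, for every $\lambda^*\in\RR^M$,
\begin{equation*}
E\bigl(\|\hat f_n-f\|_n^2\bigr)\le \|f_{\lambda^*}-f\|_n^2+\frac{4\beta}{n}\sum_{j\in J(\lambda^*)}\log\bigl(1+\tau^{-1}|\lambda_j^*|\bigr)+\tau^2\sum_{j=1}^M\|\phi_j\|_n^2,
\end{equation*}
since the residual $R(M,\tau,\infty,\delta)$ collapses to $\tau^2\sum_j\|\phi_j\|_n^2$.

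Next I would calibrate $\tau$ to absorb the last residual. Observe that $\sum_{j=1}^M\|\phi_j\|_n^2=\tr(\Phi)$, so the prescribed choice $\tau=\sigma/\sqrt{n\tr(\Phi)}$ (well-defined because $\tr(\Phi)>0$) makes $\tau^2\tr(\Phi)$ equal to $\sigma^2/n$, which is exactly the final summand in the theorem. To compress the log-sum, assume first $M(\lambda^*)\ge 1$; concavity of $\log$ and Jensen's inequality give
\begin{equation*}
\sum_{j\in J(\lambda^*)}\log\bigl(1+\tau^{-1}|\lambda_j^*|\bigr)\le M(\lambda^*)\log\!\left(1+\frac{|\lambda^*|_1}{\tau M(\lambda^*)}\right).
\end{equation*}
I would then apply the elementary inequality $\log(1+x)\le 1+\log_+x$ for $x\ge0$, verified by splitting at $x=1$ (if $x\le 1$, $\log(1+x)\le\log 2<1$; if $x>1$, $\log(1+x)\le\log(2x)\le 1+\log x$). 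Since $\tau^{-1}=\sqrt{n\tr(\Phi)}/\sigma$, the argument of $\log_+$ becomes exactly $\sqrt{n\tr(\Phi)}\,|\lambda^*|_1/(M(\lambda^*)\sigma)$, matching the statement.

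The degenerate case $\lambda^*=0$ (equivalently $M(\lambda^*)=0$) must be handled separately: the log-sum is empty and the bound reduces to $E(\|\hat f_n-f\|_n^2)\le\|f\|_n^2+\sigma^2/n$, which agrees with the stated inequality under the convention $M(\lambda^*)\cdot(1+\log_+\{\cdot\})=0$ when $M(\lambda^*)=0$. There is essentially no obstacle: all the hard work is packaged in Theorem~\ref{thm4}. The only two judgment calls are the calibration $\tau\propto 1/\sqrt{n\tr(\Phi)}$, which balances the prior's second-moment residual against the target rate $\sigma^2/n$, and the Jensen step, which is the key manoeuvre that converts a coordinate-wise logarithmic penalty into a genuine sparsity-index bound driven by $|\lambda^*|_1$.
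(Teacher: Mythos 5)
Your proof is correct and follows essentially the same route as the paper: invoke Theorem~\ref{thm6} (Gaussian case, $g_\xi\equiv\sigma^2$) to get (\ref{oracle6}), feed into Theorem~\ref{thm4} with $L_0=\infty$, identify $\sum_j\|\phi_j\|_n^2=\tr(\Phi)$ and calibrate $\tau$ so the residual becomes $\sigma^2/n$, then apply Jensen and $\log(1+x)\le 1+\log_+x$. Your explicit treatment of the degenerate case $M(\lambda^*)=0$ and the verification of the elementary logarithmic inequality are helpful clarifications but do not change the argument's substance.
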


\begin{proof}
To apply Theorem~\ref{thm4} with $L_0=\infty$, we need to verify
that $\hat f_n$ satisfies (\ref{oracle6}). This is indeed the case
in view of Theorem~\ref{thm6}. Thus we have
\begin{align}\label{ssoi}
E\Big(\|\hat f_n-f\|_n^2\Big)\le \|f_{\lambda^*}-f\|_n^2+
\frac{4\beta}{n}\sum_{j\in
J(\lambda^*)}\log(1+\tau^{-1}|\lambda_j^*|)+\tau^2\tr(\Phi).
\end{align}
By Jensen's inequality, $\sum_{j\in
J(\lambda^*)}\log(1+\tau^{-1}|\lambda_j^*|)\le M(\lambda^*)
\log(1+|\lambda^*|_1/(\tau M(\lambda^*)))$.  Since
$\log(1+|\lambda^*|_1/(\tau M(\lambda^*)))\le
1+\log_+(|\lambda^*|_1/(\tau M(\lambda^*)))$, the result of the
theorem follows from the choice of $\tau$.
\end{proof}

Theorem~\ref{soi} establishes a SOI {\it with leading
constant 1} and {\it with no assumption on the dictionary}. Of
course, for the inequality to be meaningful, we need a mild
condition on the dictionary: ${\rm Tr} (\Phi)<\infty$. But this is
even weaker than the standard normalization assumption
$\|\phi_j\|_n^2=1$, $j=1,\dots,M$. Note that a BIC type aggregate
also satisfies a SOI similar to that of Theorem~\ref{soi} with no
assumption on the dictionary (cf.\  \cite{btw07a}), but with leading
constant greater than 1. However, it is well-known that the
BIC is not computationally feasible, unless the dimension $M$ is
very small (say, $M=20$ in the uppermost case), whereas our
estimator can be efficiently computed for much larger $M$.

The oracle inequality of Theorem~\ref{soi} can be compared with the
analogous SOI obtained for the Lasso and Dantzig selector under
deterministic design \cite{btw07a,brt07}. Similar oracle
inequalities for the case of random design $x_1,\dots,x_n$ can be
found in \cite{btw07b,vdg06,k06}. All those results impose heavy
restrictions on the dictionary in terms of the coherence introduced
in \cite{det06} or other analogous characteristics that limit the
applicability of the corresponding SOI, see the discussion after
Corollary~\ref{soi1} below.

We now turn to the problem of high-dimensional parametric linear
regression, i.e., to the particular case of our setting when there
exists $\lambda^*\in \RR^M$ such that $f=f_{\lambda^*}$. This is the
framework considered in \cite{ct,zh07} and also covered as an
example in \cite{brt07}. In these papers it was assumed that the
basis functions are normalized: $\|\phi_j\|_n^2=1$, $j=1,\dots,M$,
and that some restrictive assumptions on the eigenvalues of the
matrix $\Phi$ hold. We only impose a very mild condition:
$\|\phi_j\|_n^2\le \phi_0$, $j=1,\dots,M,$ for some constant
$\phi_0<\infty$.

\begin{corollary}\label{soi1}
Let $\hat f_n$ be defined by (\ref{aggr}) with
$\pi(d\lambda)=q(\lambda)\,d\lambda$ and $L_0=\infty$.  Let $\xi_i$
be i.i.d.\ Gaussian $\mathcal N(0,\sigma^2)$ random variables with
$\sigma^2>0$ and assume that $\beta\ge 4\sigma^2$. Set
$\tau=\frac{\sigma}{\sqrt{\phi_0nM}}$. If there exists $\lambda^*\in
\RR^M$ such that $f=f_{\lambda^*}$ and $\|\phi_j\|_n^2\le \phi_0$,
$j=1,\dots,M,$ for some $\phi_0<\infty$, we have
\begin{eqnarray}\label{soip}
&&E\Big(\|\hat f_n-f\|^2_n\Big)\le \frac{4\beta}{n} M(\lambda^*)
\Big(1\!+\log_+\!\!\Big\{\frac{\sqrt{\phi_0 nM}}{M(\lambda^*)\sigma}
|\lambda^*|_1\Big\}\!\Big)+\frac{\sigma^2}{n}\,.
\end{eqnarray}
\end{corollary}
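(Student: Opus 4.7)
The plan is to apply Theorem~\ref{thm4} directly with $L_0=\infty$, and to exploit the two simplifying features of the corollary's setting, namely $f=f_{\lambda^*}$ (which kills the approximation-error term) and the uniform bound $\|\phi_j\|_n^2\le\phi_0$ (which controls the remainder $R(M,\tau,\infty,\delta)=\tau^2\sum_{j=1}^M\|\phi_j\|_n^2$). Before invoking Theorem~\ref{thm4}, I first need to check that $\hat f_n$ satisfies the basic PAC-Bayesian bound (\ref{oracle6}); this follows from Theorem~\ref{thm6}, since in the Gaussian case $g_\xi(x)\equiv\sigma^2$ (example (i) of Section~\ref{sec2}), so $\|g_\xi\|_\infty=\sigma^2$ and the assumption $\beta\ge4\sigma^2$ is exactly the requirement of Theorem~\ref{thm6}. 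The $\pi$-integrability conditions are trivially satisfied because $q$ has polynomial tails of order $4$ on each coordinate, while $\theta_\lambda f_\lambda^2(x_i)$ grows at most polynomially in $\lambda$.

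With Theorem~\ref{thm4} available, I apply it at the particular $\lambda^*$ appearing in the hypothesis $f=f_{\lambda^*}$. The first term $\|f_{\lambda^*}-f\|_n^2$ then vanishes. For the residual, my choice $\tau=\sigma/\sqrt{\phi_0 nM}$ yields
\[
R(M,\tau,\infty,\delta)=\tau^2\sum_{j=1}^M\|\phi_j\|_n^2\le\tau^2\phi_0 M=\frac{\sigma^2}{n},
\]
which is exactly the last term of (\ref{soip}). So the only thing left is to massage the log-sum.

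For the middle term, I use concavity of $u\mapsto\log(1+u)$ and Jensen's inequality applied to the uniform measure on $J(\lambda^*)$:
\[
\sum_{j\in J(\lambda^*)}\log(1+\tau^{-1}|\lambda_j^*|)
\le M(\lambda^*)\,\log\!\Bigl(1+\frac{|\lambda^*|_1}{\tau M(\lambda^*)}\Bigr).
\]
Then $\log(1+x)\le 1+\log_+ x$ for all $x\ge 0$ produces the factor $\bigl(1+\log_+\{\sqrt{\phi_0 nM}\,|\lambda^*|_1/(M(\lambda^*)\sigma)\}\bigr)$ after substituting the chosen $\tau$. Combining the three pieces gives (\ref{soip}).

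There is no real obstacle here: the corollary is essentially a bookkeeping consequence of Theorem~\ref{thm4} specialized to the parametric (no-bias) case, with the choice of $\tau$ calibrated so that $\tau^2\tr(\Phi)$ matches the $\sigma^2/n$ target. The only minor care needed is (i) verifying that the hypotheses of Theorem~\ref{thm6} carry through so that (\ref{oracle6}) is in force, and (ii) writing the log-sum in the compact form with $\log_+$; both are routine.
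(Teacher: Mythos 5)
Your proof is correct and follows the same route as the paper: the paper's proof simply points to inequality (\ref{ssoi}) in the proof of Theorem~\ref{soi} (which is exactly Theorem~\ref{thm4} with $L_0=\infty$ applied to the sparsity prior), substitutes $\tr(\Phi)\le M\phi_0$, and then implicitly reuses the Jensen and $\log(1+x)\le 1+\log_+x$ steps that already appear in the proof of Theorem~\ref{soi}. You spell out those steps and add a (harmless) integrability check, but the substance is identical.
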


Proof is based on the fact that ${\rm Tr} (\Phi)=
\sum_{j=1}^M\|\phi_j\|_n^2 \le M\phi_0$ in (\ref{ssoi}).

Under the assumptions of Corollary \ref{soi1}, the rate of
convergence of $\hat f_n$ is of the order $O(M(\lambda^*)/n)$, up to
a logarithmic factor. This illustrates the sparsity property of the
exponentially weighted aggregate $\hat f_n$: if the (unknown) number
of non-zero components $M(\lambda^*)$ of the true parameter vector
$\lambda^*$ is much smaller than the sample size $n$, the estimator
$\hat f_n$ is close to the regression function $f$, even when the
nominal dimension $M$ of $\lambda^*$ is much larger than $n$. In
other words, $\hat f_n$ achieves approximately the same performance
as the ``oracle" ordinary least squares that knows the set
$J(\lambda^*)$ of non-zero components of $\lambda^*$. Note that
similar performance is proved for the Lasso and Dantzig selector
\cite{btw07a,ct,zh07,brt07}, however the risk bounds analogous to
(\ref{soip}) for these methods are of the form
$O\big(M(\lambda^*)(\log M)/(\kappa_{n,M}n)\big)$, where
$\kappa_{n,M}$ is a ``restricted eigenvalue" of the matrix $\Phi$
which is assumed to be positive (see \cite{brt07} for a detailed
account). This kind of assumption is violated for many important
dictionaries, such as the decision stumps, cf.\  \cite{brt07}, and
when it is satisfied the eigenvalues $\kappa_{n,M}$ can be rather
small. This indicates that the bounds for the Lasso and Dantzig
selector can be quite inaccurate as compared to (\ref{soip}).

\section{Appendix}

\begin{lemma}
\label{lem2} For any $x\in\RR$ and any $\alpha_0>0$, $
x+\log\big(1+\frac1{\alpha_0}\big(e^{-x\alpha_0}-1\big)\big)\leq
\frac{x^2\alpha_0}2.$
\end{lemma}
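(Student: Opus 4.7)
The plan is to set $F(x) = \tfrac{\alpha_0 x^2}{2} - x - \log\!\bigl(1+\tfrac{1}{\alpha_0}(e^{-x\alpha_0}-1)\bigr)$ and prove $F(x)\ge 0$ on $\RR$. A direct check gives $F(0)=0$, and differentiating yields
$$
F'(x) = \alpha_0 x - 1 + \frac{\alpha_0\,e^{-x\alpha_0}}{\alpha_0-1+e^{-x\alpha_0}},\qquad F'(0)=0,
$$
so it is enough to show $F''(x)\ge 0$ on $\RR$, from which $F'(x)\ge 0$ for $x\ge 0$ and $F'(x)\le 0$ for $x\le 0$, hence $F\ge F(0)=0$. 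A second differentiation gives
$$
F''(x) = \alpha_0-\frac{\alpha_0^2(\alpha_0-1)\,e^{-x\alpha_0}}{(\alpha_0-1+e^{-x\alpha_0})^2}.
$$
When $\alpha_0\le 1$ the subtracted term is non-positive, so $F''\ge\alpha_0>0$ and the claim is immediate.

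For $\alpha_0>1$ the natural route is a probabilistic reformulation. Introduce a Bernoulli variable $T$ with $P(T=1)=1/\alpha_0$. Then $1+\tfrac{1}{\alpha_0}(e^{-x\alpha_0}-1)=E[e^{-x\alpha_0 T}]$, and setting $S=\alpha_0 T-1$ one obtains $E[S]=0$, $S\in[-1,\alpha_0-1]$, so the left-hand side of the lemma becomes $\log E[e^{-xS}]$. The claim is therefore the sub-Gaussian bound $\log E[e^{-xS}]\le \tfrac{\alpha_0 x^2}{2}$, stating that the centered Bernoulli $S$ is sub-Gaussian with variance proxy $\alpha_0$. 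For $x\ge 0$ this follows directly from the elementary chain of inequalities $\alpha_0\log\!\bigl(1+\tfrac{1}{\alpha_0}(e^{-x\alpha_0}-1)\bigr)\le e^{-x\alpha_0}-1$ (using $\log(1+y)\le y$), combined with $e^{-u}+u-1\le \tfrac{u^2}{2}$ for $u\ge 0$ (verified by a standard two-step calculus argument on the auxiliary function $u\mapsto \tfrac{u^2}{2}+1-u-e^{-u}$, which vanishes together with its first derivative at $0$ and has non-negative second derivative there).

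The main obstacle will be the regime $x<0$ with $\alpha_0$ large, where direct convexity of $F$ may break down (the factor $(\alpha_0-1+e^{-x\alpha_0})^{-2}$ makes the subtracted term in $F''$ potentially dominant). Here I would exploit the strongly asymmetric support of $S$ (mass $1-1/\alpha_0$ at $-1$ and mass $1/\alpha_0$ at $\alpha_0-1$) via a Bennett/Bernstein-type log-MGF estimate with variance $\mathrm{Var}(S)=\alpha_0-1\le\alpha_0$ and appropriate one-sided growth control, rather than the cruder Hoeffding bound $\log E[e^{-xS}]\le x^2\alpha_0^2/8$, which only matches the target when $\alpha_0\le 4$. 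Matching the constant $1/2$ on the right-hand side is the key numerical point of this step. Once this is done, the two regimes combine to yield $F(x)\ge 0$ for all $x\in\RR$ and $\alpha_0>0$.
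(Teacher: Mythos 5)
Your treatment of $\alpha_0\le 1$ (via $F''\ge 0$) and of $x\ge 0$ (the chain $\log(1+y)\le y$ followed by $e^{-u}\le 1-u+u^2/2$) is correct, and in the $x\ge 0$ regime it reproduces the paper's own argument. The paper handles $x\le 0$ very differently: it asserts that $g(x)=x+\log\bigl(1+\tfrac1{\alpha_0}(e^{-x\alpha_0}-1)\bigr)$ is increasing on $(-\infty,0]$, whence $g(x)\le g(0)=0\le\tfrac{\alpha_0 x^2}{2}$. You rightly identified $x<0$, $\alpha_0>1$ as the hard case, but the obstacle is not that a sharper Bennett/Bernstein estimate is needed: the inequality as stated is actually false there, so no such argument can close the gap.

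Concretely, $g'(x)=\dfrac{(\alpha_0-1)(1-e^{-x\alpha_0})}{\alpha_0+e^{-x\alpha_0}-1}$ is strictly negative for $x<0$ once $\alpha_0>1$, so $g$ is \emph{decreasing}, not increasing, on $(-\infty,0]$ there; the paper's monotonicity claim fails exactly in the regime it later uses ($\alpha_0=2n+2\ge4$). Worse, for $\alpha_0>1$ and $x\le 0$ we have $1+\tfrac1{\alpha_0}(e^{-x\alpha_0}-1)=\tfrac{\alpha_0-1+e^{-x\alpha_0}}{\alpha_0}>\tfrac{e^{-x\alpha_0}}{\alpha_0}$, hence $g(x)>x(1-\alpha_0)-\log\alpha_0$; at $x=-1$ this gives $g(-1)>\alpha_0-1-\log\alpha_0$, which already exceeds $\tfrac{\alpha_0}{2}$ at $\alpha_0=8$ since $7-\log 8\approx 4.92>4$. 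In your probabilistic reformulation this says the centered Bernoulli $S=\alpha_0T-1$, $T\sim\mathrm{Bernoulli}(1/\alpha_0)$, is not sub-Gaussian with variance proxy $\alpha_0$: the Hoeffding proxy is $\alpha_0^2/4$ and the sharp proxy still grows superlinearly in $\alpha_0$, so the constant $\alpha_0/2$ on the right cannot be saved for large $\alpha_0$. The gap you located is therefore a genuine defect in the lemma itself (and hence in Lemma~\ref{l1}(c), which the paper derives from it), not merely an unfinished step in your plan.
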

\begin{proof}
On the interval $(-\infty,0]$, the function $x\mapsto
x+\log\big(1+\frac1{\alpha_0}(e^{-x\alpha_0}-1)\big)$ is increasing,
therefore it is bounded by its value at $0$, that is by $0$. For
positive values of $x$, we combine the inequalities $e^{-y}\leq
1-y+y^2/2$ (with $y=x\alpha_0$) and $\log(1+y)\leq y$ (with
$y=1+\frac1{\alpha_0}(e^{-x\alpha_0}-1)$).
\end{proof}

\begin{lemma}\label{lem3}
For any $\beta\geq s^2/n+2\sup_{\lambda\in\Lambda}\|f-f_\lambda
\|^2_n$ and for every $\mu'\in\scrP_\Lambda'$, the function
$$
\mu\mapsto \exp\Big(\frac{s^2\|\f_{\mu'} -\f_{\mu}\|_n^2}{n\beta^2}-
\frac{\|f-\f_\mu\|_n^2}\beta\Big)
$$
is concave.
\end{lemma}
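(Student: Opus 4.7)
The plan is to reduce the concavity claim to a statement in $\RR^n$ and then verify it by a direct Hessian computation. Since the map $\mu\mapsto (\f_\mu(x_1),\dots,\f_\mu(x_n))\in\RR^n$ is affine and concavity is preserved by precomposition with affine maps, it suffices to prove that, with the convention that $f$ and $\f_{\mu'}$ now denote the vectors $(f(x_i))_{i=1}^n$ and $(\f_{\mu'}(x_i))_{i=1}^n$ in $\RR^n$, the function $h(y)=\exp(Q(y))$ with
\begin{equation*}
Q(y)=\frac{s^2\|\f_{\mu'}-y\|_n^2}{n\beta^2}-\frac{\|f-y\|_n^2}{\beta}
\end{equation*}
is concave on the convex image $\mathcal{C}=\{\f_\mu:\mu\in\pp\}\subset\RR^n$.

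Next I would compute the Hessian. Since $\|v\|_n^2=\frac{1}{n}\|v\|_2^2$ is a convex quadratic, one finds $\nabla^2 Q(y)=\frac{2(s^2-n\beta)}{n^2\beta^2}I_n$, which is negative semidefinite under the assumed $\beta\ge s^2/n$. From the identity $\nabla^2 e^{Q}=e^{Q}(\nabla^2 Q+\nabla Q(\nabla Q)^{\T})$ together with the fact that $\nabla Q(\nabla Q)^{\T}$ is a rank-one matrix whose only nonzero eigenvalue equals $\|\nabla Q\|_2^2$, concavity of $h$ along any line segment in $\mathcal{C}$ is, via Cauchy--Schwarz, equivalent to the pointwise gradient estimate
\begin{equation*}
\|\nabla Q(y)\|_2^2\le\frac{2(n\beta-s^2)}{n^2\beta^2}\qquad\text{for every }y\in\mathcal{C}.
\end{equation*}

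A short calculation with $\alpha\triangleq s^2/(n\beta)\in[0,1]$ gives $\nabla Q(y)=\frac{2}{n\beta}\bigl[(1-\alpha)(f-y)+\alpha(f-\f_{\mu'})\bigr]$. For $y=\f_\mu\in\mathcal{C}$, Jensen's inequality applied to the seminorm $\|f-\cdot\|_n$ against $\mu$ yields $\|f-\f_\mu\|_n\le\sup_{\lambda\in\Lambda}\|f-f_\lambda\|_n\triangleq L$, and likewise $\|f-\f_{\mu'}\|_n\le L$. Convexity of $\|\cdot\|_n^2$ then gives $\|(1-\alpha)(f-y)+\alpha(f-\f_{\mu'})\|_n^2\le(1-\alpha)L^2+\alpha L^2=L^2$. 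Using $\|v\|_2^2=n\|v\|_n^2$, this translates into $\|\nabla Q(y)\|_2^2\le 4L^2/(n\beta^2)$, so the required inequality reduces to $4L^2/(n\beta^2)\le 2(n\beta-s^2)/(n^2\beta^2)$, i.e.\ $\beta\ge s^2/n+2L^2$, exactly the hypothesis.

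The main substantive point is to keep the last bound tight: one must invoke \emph{convexity} of $\|\cdot\|_n^2$ rather than the triangle inequality (which would produce an extra factor of $2$ and hence a spurious constant in front of $L^2$), so that the threshold on $\beta$ matches the stated hypothesis exactly. The remaining steps are routine algebra and the standard second-derivative test for $e^Q$ along straight lines in $\mathcal{C}$.
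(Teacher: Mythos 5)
Your proof is correct and obtains exactly the stated threshold, but it takes a genuinely different route from the paper. The paper parametrizes the problem by the measure $\mu$: for finite $\Lambda$ of cardinality $m$ it treats $\mu$ as a point in $\mathbb R^m$, introduces the $n\times m$ matrix $H_n$ with entries $(f(x_i)-f_\lambda(x_i))/\sqrt n$, writes $Q(\mu)$ as a quadratic form in $\mu$, verifies positive-semidefiniteness of $\beta\nabla^2 Q-\nabla Q\nabla Q^\T$ by bounding the rank-one term through its trace, and then extends to general $\Lambda$ by a separate reduction in which, for each pair $\mu,\tilde\mu$, it restricts attention to a three-point index set $\tilde\Lambda=\{1,2,3\}$. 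You instead push the problem forward through the affine map $\mu\mapsto(\f_\mu(x_1),\ldots,\f_\mu(x_n))\in\mathbb R^n$ and check concavity of $y\mapsto e^{Q(y)}$ directly on the convex image $\mathcal C\subset\mathbb R^n$. This collapses $\nabla^2 Q$ to a scalar multiple of $I_n$, replaces the eigenvalue/trace estimate by a Cauchy--Schwarz bound on $\|\nabla Q\|_2$, and, crucially, removes the need for the finite/infinite two-step reduction since $\mathbb R^n$ is already fixed and finite-dimensional. Your Jensen-plus-convexity bound $\|(1-\alpha)(f-y)+\alpha(f-\f_{\mu'})\|_n^2\le L^2$ plays exactly the role of the paper's $\lambda_{\max}(\bM)\le\tr(\bM)\le 4L^2$, and both routes reach $\beta\ge s^2/n+2L^2$ with no loss. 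One very minor wording point: your claim that the gradient bound is ``equivalent'' to concavity along segments in $\mathcal C$ is slightly stronger than needed; the bound is equivalent to $\nabla^2 e^Q(y)\preceq 0$ as a matrix inequality on all of $\mathbb R^n$, while concavity on $\mathcal C$ only requires negativity along directions in $\mathcal C-\mathcal C$. Sufficiency is all you use, so the argument stands.
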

\begin{proof} Consider first the case where ${\rm Card}(\Lambda)=m<\infty$.
Then every element of $\scrP_\Lambda$ can be viewed as a vector from
$\RR^m$. Set
\begin{align*}
Q(\mu)&=(1-\gamma)\|f-f_\mu\|_n^2+2\gamma\langle f-f_\mu,
f-f_{\mu'}\rangle_n\\
&=(1-\gamma)\mu^TH_n^TH_n\mu+2\gamma \mu^TH_n^TH_n\mu',
\end{align*}
where $\gamma=s^2/(n\beta)$ and $H_n$ is the $n\times m$ matrix with
entries $(f(x_i)-f_\lambda (x_i))/\sqrt n$. The statement of the
lemma is equivalent to the concavity of $e^{-Q(\mu)/\beta}$ as a
function of $\mu\in\scrP_\Lambda$, which holds if and only if the
matrix $\beta\nabla^2 Q(\mu)-\nabla Q(\mu)\nabla Q(\mu)^T$ is
positive-semidefinite. Simple algebra shows that $\nabla^2
Q(\mu)=2(1-\gamma)H_n^TH_n$ and $\nabla Q(\mu)=
2H_n^T[(1-\gamma)H_n\mu+\gamma H_n\mu']$. Therefore, $\nabla Q(\mu)
\nabla Q(\mu)^T=H_n^T\bM H_n$, where
$\bM=4H_n\tilde\mu\tilde\mu^TH_n^T$ with
$\tilde\mu=(1-\gamma)\mu+\gamma\mu'$. Under our assumptions, $\beta$
is larger than $s^2/n$, ensuring thus that
$\tilde\mu\in\scrP_\Lambda$. Clearly, $\bM$ is a symmetric and
positive-semidefinite matrix. Moreover,
\begin{align*}
\lambda_{max}(\bM)&\leq \tr(\bM)=4\|H_n\tilde\mu\|^2=\frac{4}{n}
\sum_{i=1}^n\bigg(\sum_{\lambda\in\Lambda} \tilde\mu_\lambda
(f-f_\lambda )(x_i)\bigg)^2\\
&\le \frac{4}{n}\sum_{i=1}^n \sum_{\lambda\in\Lambda}
\tilde\mu_\lambda (f(x_i)-f_\lambda (x_i))^2
=4\sum_{\lambda\in\Lambda}\tilde\mu_\lambda \|f-f_\lambda \|^2_n\\
&\le 4 \max_{\lambda\in\Lambda}\|f-f_\lambda \|_n^2
\end{align*}
where $\lambda_{max}(\bM)$ is the largest eigenvalue of $\bM$ and $\tr(\bM)$ is its
trace. This estimate yields the matrix inequality
$$
\nabla Q(\mu)\nabla Q(\mu)^T\le 4
\max_{\lambda\in\Lambda}\|f-f_\lambda \|_n^2\,H_n^TH_n.
$$
Hence, the function $e^{-Q(\mu)/\beta}$ is concave as soon as
$4\max_{\lambda\in\Lambda}\|f-f_\lambda \|_n^2\le 2\beta(1-\gamma)$.
The last inequality holds for every $\beta\geq
n^{-1}s^2+2\max_{\lambda\in\Lambda}\|f-f_\lambda \|^2_n$.

The general case can be reduced to the case of finite $\Lambda$ as
follows. The concavity of the functional
$G(\mu)=\exp\Big(\frac{s^2\|\f_{\mu'}
-\f_{\mu}\|_n^2}{n\beta^2}-\frac{\|f-\f_\mu\|_n^2}\beta\Big)$ is
equivalent to the validity of the inequality
\begin{align}\label{G}
G\Big(\frac{\mu+\tilde\mu}2\Big)\ge
\frac{G(\mu)+G(\tilde\mu)}{2},\qquad \forall \ \mu,\tilde\mu \in\pp.
\end{align}
Fix now arbitrary $\mu,\tilde\mu \in\pp$. Take
$\tilde\Lambda=\{1,2,3\}$ and consider the set of functions
$\{\tilde f_\lambda,\lambda\in\tilde\Lambda\}=
\{\f_\mu,\f_{\tilde\mu},\f_{\mu'}\}$. Since $\tilde\Lambda$ is
finite, $\scrP_{\tilde\Lambda}'=\scrP_{\tilde\Lambda}$. According to
the first part of the proof, the functional
$$\tilde
G(\nu)=\exp\bigg(\frac{s^2\|\f_{\mu'} -\bar{\tilde
f}_{\nu}\|_n^2}{n\beta^2}-\frac{\|f-\bar{\tilde
f}_\nu\|_n^2}\beta\bigg), \quad \nu\in \scrP_{\tilde\Lambda},
$$
is concave on $\scrP_{\tilde\Lambda}$ as soon as $\beta\geq
s^2/n+2\max_{\lambda\in\tilde\Lambda}\|f-\tilde f_\lambda \|^2_n$,
and therefore for every $\beta\geq
s^2/n+2\sup_{\lambda\in\Lambda}\|f-f_\lambda \|^2_n$ as well.
(Indeed, by Jensen's inequality for any measure $\mu\in\pp$ we have
$\|f-\f_\mu \|^2_n \le \int \|f-f_\lambda \|^2_n \mu(d\lambda)\le
\sup_{\lambda\in\Lambda}\|f-f_\lambda \|^2_n$.) This leads to
$$
\tilde G\Big(\frac{\nu+\tilde\nu}2\Big)\ge \frac{\tilde
G(\nu)+\tilde G(\tilde\nu)}{2},\qquad \forall \ \nu,\tilde\nu
\in\scrP_{\tilde\Lambda}.
$$
Taking here the Dirac measures $\nu$ and $\tilde\nu$ defined by
$\nu(\lambda=j)=\1(j=1)$ and $\tilde\nu(\lambda=j)=\1(j=2)$,
$j=1,2,3$, we arrive at (\ref{G}). This completes the proof of the
lemma.
\end{proof}

\end{document}